\theoremstyle{definition}
\newtheorem{theorem}{Theorem}[section]
\newtheorem{definition}[theorem]{Definition}
\newtheorem{proposition}[theorem]{Proposition}
\newtheorem{corollary}[theorem]{Corollary}
\newtheorem{example}[theorem]{Example}
\renewcommand{\iff}{\Leftrightarrow}
\newcommand\OO{\mathcal{O}}
\newcommand\RR{\mathbb{R}}
\newcommand\PP{\mathbb{P}}
\newcommand{\Rej}{\text{Reject }}
\begin{document}
\title{Confirming the Null: Remarks on Equivalence Testing and the Topology of Confirmation}
\author{Reid Dale}
\address{Stanford University}
\email{reiddale@berkeley.edu}
\date{July 2023}
\maketitle
\begin{abstract}
Null Hypothesis Statistical Testing is a dominant framework for conducting statistical analysis across the sciences. There remains considerable debate as to whether, and under what circumstances, evidence can be said to be confirmatory of a null hypothesis. This paper presents a modal logic of short-run frequentist confirmation developed by leveraging the duality between hypothesis testing and statistical estimation.

It is shown that a hypothesis is confirmable if and only if it satisfies the topological condition of having nonempty interior. Consequently, two-sided hypotheses are not statistically confirmable owing to defects in their topological structure. Equivalence hypotheses are, by contrast, confirmable.

\end{abstract}

\newpage
\tableofcontents

\section{Is there a Frequentist Theory of Short-Run Confirmation?}

The question of whether evidence can be in support of or confirmatory of a scientific theory is of critical importance in the philosophy and interpretation of science. Scientific theories often guide actual decisionmaking, so having a theory of evidentiary support in favor of a theory is critical. A highly developed candidate for confirmation theory is given by Bayesian confirmation theory \cite{earman1992bayes}. Confirmation within the paradigm of frequentist statistics is, by contrast, far from settled. There is a lack of consensus as to whether frequentist statistics admits a formal theory of confirmation and, if it does, what the necessary and sufficient conditions for confirmation of a hypothesis are.

By some accounts, hypotheses are never confirmed, only retained or rejected according to a default logic of hypothesis testing \cite{lehmann_testing_2022}. On other accounts, such as those of Frick \cite{frick_accepting_1995} and Mayo \cite{mayo2018severe}, a hypothesis is only to be accepted or considered severely tested just in case a good-faith effort to refute the hypothesis with a high chance of detecting an error in the hypothesis, if  indeed there was one, was performed. By contrast, Lakens \cite{lakens2017equivalence} has recently claimed that two-sided hypotheses are not confirmable. 

In the present paper I develop a modal logic of evidentiary support or confirmation in the short run, i.e. for finite sets of evidence $E$. In this logic, there are three possible results of a test of a hypothesis $H$: $H$ can be confirmed, rejected, or the test can be simply inconclusive. This theory depends crucially on the duality between hypothesis testing and estimation using confidence regions. On this account, the result of a statistical test is determined by the set-theoretic relationship between a hypothesis $H$ and a $(1-\alpha)$ confidence region $R_{\alpha}$: a hypothesis is confirmed just in case $R_{\alpha} \subseteq H$, rejected if $R_{\alpha} \subseteq H^c$, and is inconclusive if $R_{\alpha}$ nontrivially intersects both $H$ and its complement $H^c$. The criteria for whether a hypothesis is in-principle confirmable can be characterized by a simple topological property of the hypothesis $H$, namely that $H$ contains nonempty interior. In particular, two-sided hypotheses of the form $H: \theta = \theta_0$ in topological spaces such as $\RR$ are not confirmable, while equivalence and non-inferiority hypotheses of the form $H: \theta \in [L, U]$ are confirmable. This suggests that the standard two-sided hypotheses are unduly emphasized within scientific practice: while refutable, they are not confirmable. 

Prior work on the relationship between the topological structure of a hypothesis $H$ and its statistical verifiability was done by Genin and Kelly, who gave a topological characterization of hypotheses 
that are guaranteed to be confirmed if true \cite{genin_topology_2017}. By contrast, this work 
focuses on characterizing hypotheses which, in principle, are statistically confirmable by \textit{some} data.

Finally, the logic of confirmation developed in this paper is used to give a formal semantics for Mayo's notion of severe testing and the Full Severity Principle. Mayo has argued in favor of a ``meta-statistical principle''
that ``ensure that only statistical hypotheses that have passed severe or probative tests are inferred from the data'' by way of criterion of severity that ``avoid[s] classic fallacies from tests that are overly sensitive, as well as those not sensitive enough to particular errors and discrepancies'' \cite{mayo_severe_2006}. The fallacies Mayo speaks of are an artifact of attempting to confirm two-sided hypotheses. On my account, a theory $\tau$ is (partially) confirmed just in case an associated hypothesis $H_{\tau}$ expressing the empirical adequacy of the theory is confirmed.

\section{Doxastic Interpretations of Hypothesis Testing}

Null Hypothesis Statistical Testing (NHST) is a dominant mode of scientific inference. In NHST, given a prespecified significance level $\alpha$ the standard way to interpret an observed p-value $> \alpha$ for a null hypothesis $H_0$ is given the gloss of ``retaining the null $H_0$'' or ``failure to reject $H_0$'' at level $\alpha$. We assume that $\Theta$ is a parametric family of probability distributions on sample space $\Omega$, $H_0 \subset \Theta$ is a set of parameters, and that the alternative hypothesis $H_1 = H_0^c$ is the complement of $H_0$ within $\Theta$. We further assume that evidence $E$ is drawn independently and identically distributed from some distribution $\theta\in \Theta$. For each $n$, this data uniquely determines a probability measure $\PP_{\theta}$ on $\Omega^n$ given by $\theta^{\otimes n}$.

\subsection{The Default Account of Hypothesis Testing} \label{default-orthodox}

Lehmann/Romano \cite{lehmann_testing_2022} is a standard, orthodox frequentist reference for hypothesis testing. For them, hypothesis testing is a tool to solve the following decision problem: on the basis of collected evidence $E$, should we \textit{accept} or \textit{reject} the hypothesis? As they describe it:

\begin{quote} 
We now begin the study of the statistical problem that forms the principal subject of this book, the problem of hypothesis testing. As the term suggests, one wishes to decide whether or not some hypothesis that has been formulated is correct. The choice here lies between only two decisions: accepting or rejecting the hypothesis. A decision procedure for such a problem is called a test of the hypothesis in question. (Page 61)
\end{quote}

Salient to this framing of hypothesis testing are the following features:
\begin{enumerate}
    \item (Asymmetric Roles of Null and Alternative Hypotheses) There is a specified hypothesis $H_0$, the \textit{null} hypothesis, that will be accepted \textit{unless overwhelming evidence overrides it,} and
    \item (Bivalent Decision Problem) There are only two possible values for the decision problem: \textit{accept} or \textit{reject}.
\end{enumerate}

The null hypothesis $H_0$ serves as the \textit{default} output of the test: if the test does not give sufficient evidence against $H_0$, $H_0$ is accepted. For Lehmann and Romano, the design of the test of $H_0$ should be related in some way to the investigator's attitude regarding the truth or plausibility of $H_0$, and should in part be calibrated as a function of the level of evidence required to shake their belief in the hypothesis:

\begin{quote}
    Another consideration that may enter into the specification of a significance level is the attitude toward the hypothesis before the experiment is performed. If one firmly believes the hypothesis to be true, extremely convincing evidence will be required before one is willing to give up this belief, and the significance level will accordingly be set very low. 
    (Page 63)
\end{quote}
On this account of hypothesis testing, the investigator is intended to set the significance of the test as a function of his or her epistemic attitude towards the null hypothesis, resulting in an asymmetry between the null and alternative hypothesis. This epistemic attitude is \textit{not} to be confused with the notion of a prior probability assigned to the hypothesis, but rather with the investigator's tolerance for a Type I error (i.e. the probability that the test mistakenly rejects the null hypothesis).

Many introductory texts in statistics give an interpretation of hypothesis testing in this manner: one \textit{retains} $H_0$ in case $p > \alpha$. Wasserman \cite{wasserman2006all}, for instance, gives this interpretation, likening hypothesis testing to a criminal trial:
\begin{quote}
[H]ypothesis testing is like a legal trial. We assume someone is innocent unless the evidence strongly suggests that he is guilty. Similarly, we retain $H_0$ unless there is strong evidence to reject $H_0$. (Page 150)
\end{quote}

In this telling, the principle of ``presumption of innocence until proven guilty" plays the same formal role as ``retaining the null unless it is rejected with significance $\alpha$." Such principles are \textit{default} principles in the sense that they normatively constrain one's beliefs or actions to a default position (acquittal of the defendant in the case of a criminal trial and retention of the null in the case of hypothesis testing). For example, the presumption of innocence can be interpreted as constraining a factfinder against convicting a defendant in the special case that the prosecution provides \textit{no} evidence of the guilt of the defendant, regardless of the factfinder's underlying personal credence regarding the defendant's guilt. 

There are two major problems with the default account of hypothesis testing that render it implausible. First, and most simply, the investigator need not render a decision in favor of either $H_0$ or $H_1$, by contrast with the decision problem posed in a criminal trial. One may have \textit{intermediate attitudes} toward the hypothesis $H_0$ both prior to and following the experiment. For example, one could for example \textit{affirm} $H_0$, be \textit{ambivalent to} $H_0$, or \textit{reject} $H_0$ in response to evidence $E$. Such a system would require at minimum three attitude towards a hypothesis, more than the two given in the orthodox account. 

Secondly, simply \textit{declaring} $H_0$ to be the null hypothesis does not \textit{warrant} its being taken as default. This issue is most salient when considering the notion of Type II errors, which occurs when a test fails to reject a false hypothesis. On the default view of hypothesis testing, failing to reject $H_0$ results in accepting $H_0$. By designating $H_0$ as the default, a Type II error \textit{is} a genuine error in belief. But it is an error only if one accepts the normative force of the inference to the default in case of a failed test. Implicit in referring to a Type II error as an \textit{error} is an underlying model of default logic as \textit{the} logic of null hypothesis statistical testing. 

To avoid such errors therefore requires a theory of warrant for a default hypothesis. In practical settings, however, warrant is typically not given. It is standard practice to test the two-sided null hypothesis of ``no difference'' even when the investigator has reason to believe that this hypothesis is false. Consider the two-sided null hypothesis is $H_0: \theta = 0$, where $\theta = \mu_A - \mu_B$ is the difference in mean between two populations. Typically, justification for $H_0$ is not given, and $H_0$ may itself be implausible at the outset of the investigation. For example, suppose that I already have knowledge that the quantity $\mu_i$ is causally affected by some property $P$, and that the base rate of $P$ between groups $A$ and $B$ differs substantially. Then the null hypothesis $H_0$ can be plausibly doubted along these lines, so is not a reasonable default position to take, \textit{despite} its apparent symmetry between $A$ and $B$. 

This is not to say that there are no situations in which the default account of hypothesis testing may be of value. The default account does provide a discursive framework by which claims made and believed by an interlocutor can be treated as default and subjected to scrutiny through statistical analysis. However, this discursive paradigm does not exhaust the modes of scientific discourse, which may for example include scenarios where the investigator has no prior attitude toward either $H_0$ or $H_1$ and would like to assess whether \textit{either} of them has strong evidence in its favor.

Through its reliance on the specification of a default hypothesis and the bivalence of the testing procedure, the default account of hypothesis testing fails to provide a framework that adequately models the various epistemic attitudes an investigator might reasonably have towards a hypothesis.  

\subsection{The Falsificationist Account of Hypothesis Testing and Frick's Counterargument}

Owing to problems in the interpretation of $p$-values, and motivated by Popper's falsificational account of scientific theories, some maintain that the only normative role that hypothesis testing plays is that of refuting the null hypothesis. This account stands in contrast to the default account in terms of the admissible values for a test: either $H_0$ is rejected, or $H_0$ is not rejected. Thus, a Type II error is no longer an error in the investigator's updated belief in $H_0$. But, like the default account of hypothesis testing, there is an asymmetric role between the null and alternative hypotheses and the test procedure is bivalent.



In a highly cited paper, Frick reconstructs an argument as to \textit{why} people might think the null hypothesis ought never be accepted. He writes:

\begin{quote}
    In any given interval (e.g., from -10 to +10), there are an infinite number of real numbers. When the answer to a question could be any real number in an interval, and when the probability of each real number being the answer is approximately the same, the finite amount of probability must be spread over the infinite number of
 real numbers. Each individual real number must receive essentially zero (i.e., infinitesimal) probability, and only an interval of real numbers can receive a nonzero probability. (A nonzero probability is one greater than essentially zero; winning the lottery, for example, has a nonzero probability.) 
 
 This logic would be applied (incorrectly) to the problem of accepting the null hypothesis in the following way: The question concerns the size of the effect one variable has on another, with size zero corresponding to the null hypothesis. Zero is just one point in the interval of infinite possible answers, so the probability of the effect being exactly 0.00000\dots (with the zeros being carried out to an infinite number of decimal places) is essentially zero. Therefore, the null hypothesis is impossible.  (Pages 132-133)
\end{quote}

It is worth highlighting that only two-sided hypotheses are contained within the scope of this argument, since hypotheses of the form $H_0: \theta \in [L,U]$ with $L< U$ can have positive measure. From a frequentist perspective, the hypothetical argument that Frick describes is indeed untenable. Frequentists do not recognize the meaningfulness of a nontrivial \textit{probability} measure on the space of hypotheses.

To see why, let us first formalize the argument given above in a Bayesian paradigm. Let $\Theta = [0,1]$ and let $H_0: \theta = \frac{1}{2}$. Then with respect to the uniform prior, or indeed any atomless prior, on $\Theta$, $\PP(H_0) = 0$. Thus $\PP(H_0|E) = 0$ for any evidence accrued, and so $H_0$ is unconfirmable even if true. Unfortunately, this argument is not admissible within a frequentist paradigm.

For a frequentist, probability statements of the form $\PP(H)$ can be formally defined, but are not epistemically useful. Let $\theta_{\top}$ be the true value of the parameter $\theta$. Given $\theta_{\top}$, one can assign the probability that $H$ is true to be  \[\PP_{\top}(H) = \left\{ \begin{array}{ll}
         1 & \mbox{if } \theta_{\top}\in H ;\\
        0 & \mbox{if } \theta_{\top}\notin H.
        \end{array} \right.  \]
$\PP_{\top}$ assigns a probability of $H$ equal to its truth value: $H$ has positive probability if and only if $H$ has probability $1$ if and only if $\theta_{\top}\in H$. With respect to this measure, $\PP(\{\theta_{\top}\}) = 1$ and so $\PP(\{\theta_{\top}\}|E) = 1$. Thus, there \textit{exists} a probability measure for which $\theta_{\top}$ is correctly confirmed conditional on any evidence. At first glance, this may suggest that $\{\theta_{\top}\}$ is confirmable. But this is incorrect. One cannot know the probability assignments of $\PP_{\top}$ \textit{a priori}: exhaustive knowledge of the probability assignments of $\PP_{\top}$ would entail knowledge of the true value $\theta_{\top}$. 

It would seem therefore that arguing that two-sided hypotheses $H_0: \{\theta\}$ are unconfirmable does not fit cleanly within a measure-theoretic framework for the frequentist.

Frick's argument in favor of the confirmability of the null relies on a probability theoretic grounding inadmissible to the frequentist:

\begin{quote}
However, the argument above does not necessarily
apply to accepting the null hypothesis. A few special numbers can be allocated a nonzero probability before spreading the remaining probability over the remaining real numbers. In the scales used by psychologists, zero is usually a special, meaningful number. Therefore, the number zero can be assigned a nonzero probability of occurring without violating any rules of probability. 
(Page 133)
\end{quote}

Frick correctly rebuts the Bayesian argument given above: for a Bayesian investigator it is absolutely correct that ``the number zero can be assigned a nonzero probability of occurring without violating any rules of probability," and that one can ``allocat[e] a nonzero probability'' to the value of the effect size being exactly zero. These considerations suggest that one may, in certain circumstances where there is expert knowledge, be justified in having a prior probability of truth assigned to a two-sided hypothesis greater than $0$, and therefore be confirmable in the Bayesian sense as the conditional probability $\PP(H_0|E)$ has the possibility of becoming arbitrarily close to $1$.

Despite the apparent success of Frick's argument, it ultimately rests on the assignment of prior probabilities to hypotheses inadmissible to the frequentist. In frequentist inference, there is no sense in talking about a prior \textit{probability} of the hypothesis in question.

In the following section I provide an alternative, frequentist analysis of the possibility of confirming a two-sided hypothesis. I will argue that the conclusion Frick responds to is correct: two-sided hypotheses in continuous parameter spaces cannot be confirmed. This is not due to any measure theoretic consideration, but instead is due to the \textit{topological} defects of two-sided hypotheses. The argument I will give formalizes Lakens' claim that ``it is statistically impossible to support the hypothesis that a true effect size is exactly zero.'' \cite{lakens2017equivalence}.

\subsection{A Middle Path: Confidence Intervals, Decision, and Indecision}

\subsubsection{Duality Between Confidence Intervals and Hypothesis Testing}

Recall from \cite{keener2010theoretical} (Definition 9.17) that a two-sided $1-\alpha$ confidence interval (or, more generally, a \textit{confidence region}) is a random set $c(E)$ which is a function of the collected data (or ``evidence'') $E$ such that for each $\theta\in\Theta$
\begin{equation}
    \PP_{\theta}(\theta \in c(E)) \geq 1-\alpha.
\end{equation}
I emphasize that this is \textit{not} a probability statement about $\theta$; rather, it is a probability statement about the properties of the stochastically-generated object $c(E)$ \cite{wasserman2006all}.\footnote{The dependence of the event $\theta\in c(E)$ on the stochastic nature of $c(\cdot)$ can be readily seen by writing the event as $X_{\theta} = \{ E\in \Omega^n\,|\, \theta\in c(E)\}$, which is a subset of $
\Omega^n$.} 

For many common tests, rejection of $H_0$ at level $\alpha$ is equivalent to a property of a $1-\alpha$ confidence interval $c(E)$:
\begin{equation} 
\Rej H_0 \Leftrightarrow c(E) \subseteq H_0^c
\end{equation}
In other words, one rejects $H_0$ at the $\alpha$ level in case the associated confidence interval $c(E)$, a nonempty open subset of $\Theta$, is fully contained within the complement of $H_0$. This apparent reduction of hypothesis testing to set-theoretic relationships between confidence intervals and subsets of the underlying parameter space render it amenable to modal interpretation.\footnote{An explanation of the duality between estimation of the confidence interval and hypothesis testing can be found in \cite{keener2010theoretical} section 12.4: ``Duality between testing and interval estimation.''}

By contrast with the previously discussed interpretations of hypothesis testing, the utilization of confidence intervals allows us to remedy the epistemic issues discussed above. 

\begin{enumerate}
    \item (Symmetric Role of the Null and Alternative Hypotheses) The null and alternative hypotheses $H_0$ and $H_1$, neither of which will be accepted at default.
    \item (Multivalent Decision Problem) There are three potential values for the decision problem for each hypothesis $H_i$: \begin{itemize}
    \item Confirm $H_0$: $\square H_0$,
    \item Confirm $H_1$: $\square H_1$, and
    \item Neither Confirm $H_0$ or $H_1$: $\lozenge H_0 \wedge \lozenge H_1$.
    \end{itemize}
\end{enumerate}

The truth condition for each of the possible outcomes of a test can be given directly in terms of the set-theoretic relationship between the (prespecified) confidence interval process $c$, evidence $E$, and hypotheses $H_0$ and $H_1$. Intuitively, we say that a hypothesis is confirmed provided that a sufficiently leveled confidence interval is \textit{fully contained within} $H_i$. The three possible set-theoretic relationships can be visualized as follows:

\begin{figure}[htbp]
  \centering
   \includegraphics[width=6in]{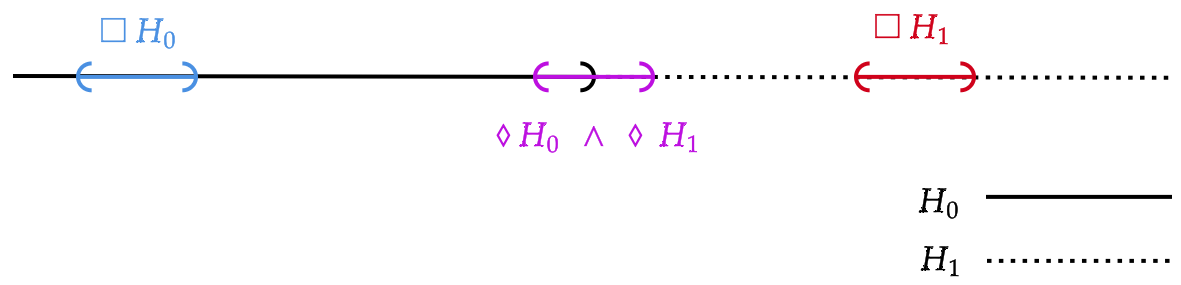}
  \caption{Satisfaction Conditions for Confirmation}
\end{figure}

More formally, we have 
\begin{enumerate}
    \item $\square H_i$ if and only if $c(E) \subseteq H_i$, and 
    \item $\lozenge H_0 \wedge \lozenge H_1$ if and only if $c(E) \cap H_0 \neq \varnothing$ and $c(E) \cap H_1 \neq \varnothing$.
\end{enumerate}

Confirmation of a hypothesis $H_i$ in this sense is therefore witnessed by a sufficiently narrow, well-located confidence region. Indecisiveness of the test occurs when the confidence interval is not fully contained within \textit{either} $H_0$ or $H_1$. 

Reducing the problem of hypothesis testing to containment properties of confidence regions results in a particularly natural and simple semantics for a multivalent logic of hypothesis testing. In the remainder of this section we illustrate how to think of the possibility of confirmation through the lens of hypothesis testing through three examples. 

\subsubsection{Two-Sided Hypotheses} \label{twosided_defect}
To begin with an example, consider $\Theta = \RR$ and suppose
\begin{equation} H_0: \theta = \theta_0 \text{ vs. } H_1:\theta\neq \theta_0\end{equation}
We reject $H_0$ provided that the $1-\alpha$ confidence interval excludes $\{\theta_0\}$:
\begin{equation} \Rej H_0 \Leftrightarrow c(E) \cap \{\theta_0\} = \varnothing.\end{equation}
Likewise, by swapping the role of the null and alternative hypotheses,
\begin{equation} \Rej H_1 \Leftrightarrow c(E) \cap \{\theta_0\}^c = \varnothing.\end{equation}

The hypothesis $H_0$ can be rejected: if, for example, $c(E) = (\theta_0 + \epsilon, \theta_0 + \eta)$ with $\epsilon, \eta > 0$ then $c(E) \cap \{\theta_0\} = \varnothing$ and we can reject $H_0$. 

By contrast, $H_1$ \textit{cannot} be rejected. There are many proofs of this, but I focus here on a topological one: the confidence interval $c(E)$ is a nonempty open subset of $\RR$, and $\{\theta_0 \}$ is a single point. Thus $\{\theta_0 \}^c$ is a dense subset of $\RR$ and therefore intersects $c(E)$, so $H_1$ cannot be rejected. This entails that $H_0$ cannot in principle be \textit{confirmed} in this testing paradigm.

The above line of reasoning can be made more simple by converting it to the notation of modal logic. We begin by observing that ``failure to reject $H_0$'' expresses a kind of \textit{possibility} or \textit{plausibility}\footnote{The interpretation of confidence intervals modally, as a range of plausible values for the parameter $\theta$, has been advocated by Smithson \cite{smithson2003confidence}}: the evidence $E$ generating the confidence interval $c(E)$, when subjected to the test $T$ (i.e. $c(E) \subseteq H_1$), left open the possibility of $H_0$ (at least at the level $\alpha$). Thus, we get the following set of equivalences.

\[ \begin{array}{lcl}
\lozenge H_0 & \Leftrightarrow &\neg (\Rej H_0) \\
            & \Leftrightarrow &\neg (c(E) \cap H_0 = \varnothing) \\
             & \Leftrightarrow &(c(E) \cap H_0 \neq \varnothing) \\
             & \Leftrightarrow & \neg (c(E) \subseteq H_1)
\end{array} \] 

Adopting the typical modal definition $\square = \neg \lozenge \neg$ we get a definition for the \textit{confirmation} of the null hypothesis given evidence:

\[ \begin{array}{lcl}
\square H_0 & \Leftrightarrow & \neg\lozenge\neg H_0 \\
 & \Leftrightarrow & \neg \lozenge H_1 \\
 & \Leftrightarrow & \neg(\neg (c(E) \subseteq H_0)) \\
 & \Leftrightarrow &  c(E) \subseteq H_0. 
\end{array} \] 

In plainer language, one confirms the null $H_0$ just in case one rejects the alternative $H_1$. This means that confirmation is a distinguished form of of null result: one in which not only is the $p$-value of the test greater than $\alpha$ since $c(E) \cap H_0 \neq \varnothing$, but one in which the alternative hypothesis is simultaneously \textit{rejected} at level $\alpha$. Thus, confirmation is a more stringent condition than merely finding a null result. 

However, the very possibility of the confirmation of a hypothesis hinges on the topological structure of the hypothesis. Returning to the original two-sided example, if $H_0 = \{\theta\}$ and $H_1 = \RR \setminus H_0$ then 
\[ \Vdash \neg \square H_0;\]
i.e. it is a validity that $H_0$ is not confirmable relative to $H_1$. Equivalently,
\[ \Vdash \lozenge H_1. \]
Two-sided hypotheses can therefore be considered non-confirmable in a precise sense. 

\subsubsection{Two One-Sided Hypotheses}

Let $M>0$. Consider the Two One-Sided Test (TOST) hypothesis
\begin{equation} H_0: |\theta - \theta_0| > M \text{ vs. } H_1: |\theta - \theta_0| \leq M\end{equation}
We reject $H_0$ provided that the $1-\alpha$ confidence interval excludes $(-\infty, -M) \cup (M, \infty)$:
\begin{equation} \neg\lozenge H_0 \Leftrightarrow c(E) \cap H_0 = \varnothing.\end{equation}
By contrast with two-sided hypotheses, the TOST paradigm admits a nontrivial notion of confirmation of the null \textit{and} the alternative:
\begin{equation} \Vdash \square H_0 \Leftrightarrow c(E) \cap [-M,M] = \varnothing.\end{equation}
and
\begin{equation} \Vdash \square H_1 \Leftrightarrow c(E) \cap \Big( (-\infty, -M) \cup (M, \infty) \Big) = \varnothing.\end{equation}

both of which are possible.




In the following section we build up a general framework for reasoning about this kind of confirmation.

\section{Formal Semantics for the Modal Logic of Hypothesis Testing}

\subsection{Syntax}

The salient features of this logic require only a very sparse syntax. We may suppose that we have a single sentence $\{H_0\}$, the negation connective $\{\neg \}$, and the possibility operator $\lozenge$. We define $H_1 = \neg H_0$ and $\square H_i = \neg \lozenge \neg H_i$. We adopt the usual definitions of satisfaction for the remaining Boolean connectives, although they are not our primary focus. 

\subsection{Semantics} \label{semantics-naive}

Let $\Theta$ be a space of parameters, and $H_0 \subset \Theta$. Let $\Omega$ be the sample space of observations. By evidence $E$, I mean a finite string of observations $E\in \Omega^{<\omega}$. The semantics of modal logics are commonly exhibited in terms of Kripke frames by defining an accessibility relation. Kripke semantics for the logic described in this section are provided in Appendix \ref{kripke-appendix}. To keep the core notions of statistical confirmation accessible to a wide audience, we present a more direct equivalent semantics.

A \textit{possible world} is a pair $(\theta, E)$ where $\theta\in \Theta$ and $E\in \Omega^{<\omega}$.
Let $H_0 \subseteq \Theta$. We write 
\begin{equation} w = (\theta_w, E_w) \Vdash H_0 \iff \theta_w \in H_0.\end{equation}

Satisfaction of the negation of a sentence $S$ is defined via:

\begin{equation} w \Vdash \neg S \iff \neg (w \Vdash S).\end{equation}

To define the semantics on modal operators, we need a notion analogous to that of a confidence interval. By a confidence function I mean a function $c: \Omega^{<\omega} \to 2^{\Theta}$, which assigns to evidence $E$ a subset $c(E) \subset \Theta$. At the outset we do not require anything further of $c$, not even that $c$ be nonempty: we can interpret $c(E) = \varnothing$ as saying that the evidence $E$ is incompatible with all $\theta\in \Theta$. However, we say that $c$ is non-refuting provided $c(E) \neq \varnothing$ for all $E\in \Omega^{<\omega}$. 

Given a confidence function $c$ on $\Omega^{<\omega}$, we can define the semantics of possibility operators:

\begin{equation} w \Vdash \lozenge H_0 \iff c(E_w) \cap H_0 \neq \varnothing.
\end{equation}

Routine calculation shows that 

\begin{equation} w \Vdash \square H_0 \iff c(E_w) \subseteq H_0.
\end{equation}

Note that $\square$ is generally irreflexive (i.e. the reflexive axiom $\square \phi \rightarrow \phi$), underlying a key feature of statistical confirmation. In general,

 \begin{equation} w \Vdash \square H  \not\Rightarrow w \Vdash H. \end{equation}

For example, let $H_0 = \{\theta_0\}^c$ and $w = \{\theta_0, E\}$ such that $\theta_0\notin c(E)$. Then $w \not\Vdash H$ since $\theta_w = \theta_0$, but $w \Vdash \square H_0$ since $c(E) \subseteq H_0$ as $\theta_0\notin c(E)$.

A final observation is that there are precisely three satisfiable outcomes for the satisfaction of the modal sentences:
\begin{proposition} \label{three-possibilities}
Let $c$ be non-refuting. Then for every $H_0$ and world $w$, exactly one of the three sentences is true: $w \Vdash \square H_0$, $w\Vdash \square H_1$, or $w\Vdash \lozenge H_0 \wedge \lozenge H_1$.
\end{proposition}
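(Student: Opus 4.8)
The plan is to reduce all three modal sentences to Boolean combinations of the two atomic possibility facts $p := (w \Vdash \lozenge H_0)$ and $q := (w \Vdash \lozenge H_1)$, and then to settle the claim by a short case analysis on the pair $(p,q)$.

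First I would unwind the definitions. Writing $C := c(E_w)$, the satisfaction clauses give $p \iff C \cap H_0 \neq \varnothing$ and $q \iff C \cap H_1 \neq \varnothing$. Using $H_1 = \neg H_0$ together with the abbreviation $\square H_i = \neg \lozenge \neg H_i$, I would record the two identities
\[ (w \Vdash \square H_0) \iff \neg(w \Vdash \lozenge H_1) \iff \neg q, \qquad (w \Vdash \square H_1) \iff \neg(w \Vdash \lozenge H_0) \iff \neg p. \]
Equivalently, $C \subseteq H_0 \iff C \cap H_1 = \varnothing$, because $H_0$ and $H_1$ partition $\Theta$. Under this translation the three candidate sentences $\square H_0$, $\square H_1$, and $\lozenge H_0 \wedge \lozenge H_1$ become, respectively, $\neg q$, $\neg p$, and $p \wedge q$.

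Next I would invoke the non-refuting hypothesis, which is the only place it is used. Since $C \neq \varnothing$ and $\Theta = H_0 \sqcup H_1$, we have $C = (C \cap H_0) \sqcup (C \cap H_1)$, so at least one of the two intersections is nonempty; that is, $p \vee q$ holds. This rules out the assignment in which both $p$ and $q$ are false.

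Finally I would check the three surviving assignments of $(p,q)$ against the translated sentences. When $p$ holds and $q$ fails, exactly $\neg q$ is true; when $q$ holds and $p$ fails, exactly $\neg p$ is true; and when both hold, exactly $p \wedge q$ is true. In each admissible case precisely one of the three sentences is satisfied, as required. I do not anticipate a genuine obstacle: all of the content sits in the first step, namely the observation that $\square H_i$ is just the negation of $\lozenge H_{1-i}$, so that the three possible outcomes of a test form an exhaustive and mutually exclusive trichotomy on the pair $(\lozenge H_0, \lozenge H_1)$ once the degenerate empty-region case has been excluded.
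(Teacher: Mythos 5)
Your proof is correct and is essentially the paper's argument in a slightly more explicit form: both rest on the duality $\square H_i \iff \neg\lozenge H_{1-i}$ and use the non-refuting hypothesis only to exclude the case $c(E_w)=\varnothing$ (your assignment $\neg p\wedge\neg q$, the paper's $\square H_0\wedge\square H_1$). The paper compresses your truth-table check into the tautology $P\vee Q\vee\neg(P\vee Q)$ plus a one-line contradiction, but the content is identical.
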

\begin{proof}
It suffices to show that $\square H_0 \wedge \square H_1$ is false, since $\lozenge H_0 \wedge \lozenge H_1 \iff \neg (\square H_0 \vee \square H_1)$ and $P\vee Q \vee \neg(P \vee Q)$ is a propositional tautology.

Suppose for contradiction that $w\Vdash \square H_0 \wedge \square H_1$. Then $c(E_w) \subset H_0$ and $c(E_w)\subset H_0^c$, so $c(E_w) = \varnothing$. This contradicts $c$ being non-refuting.
\end{proof}

\subsection{The Topological Case}

In this section we add additional constraints to $\Theta$ and $c$ in line with the topological properties of the confidence interval in parametric statistics. The reader unfamiliar with topological terminology is advised to consult the classic textbook \cite{Munkres2000Topology} for the definitions of relevant terminology.

\begin{definition}
Let $\Theta$ be a topological space and $\OO(\Theta)$ the collection of open sets. A open-valued confidence function is a map $c: \Omega^{<\omega} \to \OO(\Theta)$. 
\end{definition}

Open-valued confidence functions are natural objects. A confidence function being open-valued is just to say that if $\theta$ is plausible with respect to $E$ (i.e. $\theta \in c(E)$), then sufficiently small perturbations of $\theta$ are \textit{also} plausible with respect to $E$; in topological terms, this means precisely that there is an open set $U\in \OO(\Theta)$ such that if $\theta' \in U$ then $\theta'$ is ``sufficiently close'' to $\theta$.\footnote{It is worth reviewing a few subtleties pertaining to the use of open sets. Regarding discrete sets of parameters $\Theta$, we may assign $\Theta$ the discrete topology. In this case, every subset of $\Theta$ is both closed and open.  Regarding the use of open-valued confidence sets, one might argue that in the construction of confidence intervals via the bootstrap, the confidence interval should be a \textit{closed} interval with nonempty interior so that the confidence interval nominally contains the $1-\alpha$ endpoints. To assuage this concern, I note that if one replaces ``open-valued'' with the constraint that $c(E)$ is the closure of a nonempty open set, all subsequent results in this section remain true as the proofs require only that $c(E)$ have nonempty interior.} 

Under this condition, we can give a sufficient condition for when a null hypothesis $H_0$ is nonconfirmable:

\begin{proposition} \label{emptyinterior-nonconf}
Let $\Theta$ be a topological space and $c$ a open-valued non-refuting confidence function. Suppose that $H_0$ has empty interior. Then for every $w$, $w \Vdash \lozenge H_1$; i.e.  $w\Vdash \neg \square H_0$.

In particular, for a two-sided hypothesis $H_0: \theta = \theta_0$, no world $w$ satisfies $w\Vdash \square H_0$.
\end{proposition}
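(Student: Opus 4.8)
The plan is to exploit the elementary topological duality between a set having empty interior and its complement being dense, and to feed this through the definitions of $\lozenge$ and $\square$ established in the previous subsection. First I would unpack the goal. By definition of the possibility operator, $w \Vdash \lozenge H_1$ means precisely $c(E_w) \cap H_1 \neq \varnothing$, where $H_1 = H_0^c$. Since a world $w = (\theta_w, E_w)$ enters the modal semantics only through its evidence component $E_w$, and since the claim is universally quantified over $w$, it suffices to fix an arbitrary $E_w$ and show that the region $c(E_w)$ must meet $H_1$. The parameter $\theta_w$ plays no role, which is worth flagging explicitly.

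The core of the argument rests on a single observation: under the standing hypotheses, $c(E_w)$ is a \emph{nonempty open} subset of $\Theta$ — nonempty because $c$ is non-refuting, and open because $c$ is open-valued. I would then argue by contradiction. Suppose $c(E_w) \cap H_1 = \varnothing$; then $c(E_w) \subseteq H_0$, so $c(E_w)$ is a nonempty open subset of $H_0$, whence $c(E_w) \subseteq \mathrm{int}(H_0)$. But $\mathrm{int}(H_0) = \varnothing$ by assumption, contradicting $c(E_w) \neq \varnothing$. Therefore $c(E_w) \cap H_1 \neq \varnothing$, i.e. $w \Vdash \lozenge H_1$. The equivalence $\lozenge H_1 \iff \neg \square H_0$, already derived in the two-sided discussion (and immediate from $\square = \neg\lozenge\neg$ together with $H_1 = \neg H_0$), then yields $w \Vdash \neg \square H_0$, as required. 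For the \emph{in particular} clause I would simply note that in the intended space $\Theta = \RR$ — or any $T_1$ space with no isolated points — the singleton $\{\theta_0\}$ has empty interior, so the general statement applies verbatim with $H_0 = \{\theta_0\}$, forcing $w \Vdash \neg\square H_0$ at every $w$.

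There is no deep obstacle here: the result is essentially the definitional translation of ``empty interior'' combined with the two standing assumptions on $c$. The only point deserving care is ensuring that \emph{both} hypotheses on $c$ are genuinely invoked, since dropping either one breaks the argument. Without the non-refuting assumption, $c(E_w)$ could be empty and hence vacuously contained in $H_0$, making $\square H_0$ satisfiable; without open-valuedness, $c(E_w)$ could sit inside $H_0$ even when $\mathrm{int}(H_0) = \varnothing$ — for instance if $c(E_w)$ were a single point of $H_0$. Making these two uses transparent is the substance of the proof, and it also clarifies exactly why the topological framing (rather than a measure-theoretic one) is the right setting for the nonconfirmability of two-sided hypotheses.
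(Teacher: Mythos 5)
Your proof is correct and is essentially the paper's own argument: the paper notes directly that empty interior of $H_0$ makes $H_1$ dense, so the nonempty open set $c(E_w)$ must meet $H_1$, whereas you run the same definitional duality as a contradiction via $c(E_w) \subseteq \mathrm{int}(H_0) = \varnothing$. The two formulations are interchangeable, and your explicit accounting of where each hypothesis on $c$ is used is a welcome (if minor) addition.
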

\begin{proof}
Since $H_0$ has empty interior, $H_1$ is dense. Since $c$ is open-valued non-refuting, $c(E_w)$ is an open nonempty set for all $E_w$. Thus $c(E_w) \cap H_1 \neq \varnothing$. Thus $w\Vdash \lozenge H_1$; equivalently, $w\Vdash \neg \square H_0$.
\end{proof}

This observation extends to a characterization of the realizability of $\square H_i$ and $\lozenge H_i$ by possible worlds.

\begin{theorem}\label{satisfiability-criteria}
Let $\Theta$ be a connected topological space, $c$ an open-valued confidence function, $\Omega$ the sample space. 

Suppose further that the image $c(\Omega^{<\omega}) \subset 2^{\Theta}$ satisfies 
\begin{enumerate}
    \item ($\Theta$-Exhaustiveness) for each $\theta\in \Theta$ there exists an $E_{\theta}\in \Omega$ such that $\theta \in c(E_{\theta})$.
    \item (Precision) For each open $U\in \OO(\Theta)$ there is evidence $E_U \in \Omega^{<\omega}$ such that $c(E_U)$ is nonempty and $c(E_U)\subseteq U$.
\end{enumerate}

Suppose that $H\subseteq \Theta$ is nonempty. Then
\begin{enumerate}
    \item (Satisfiability of $\square H$) There is a world $w$ such that $w\Vdash \square H$ if and only if $H$ has nonempty interior, and
    \item (Satisfiability of $\lozenge H \wedge \lozenge H^c$) There is a world such that $w\Vdash \lozenge H \wedge \lozenge H^c$ if and only if $H^c$ is nonempty.
\end{enumerate}
\end{theorem}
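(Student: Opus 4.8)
The plan is to unpack both biconditionals against the definitions $w \Vdash \square H \iff c(E_w) \subseteq H$ and $w \Vdash \lozenge H \iff c(E_w) \cap H \neq \varnothing$, handling the two parts separately. For part (1), the ``if'' direction should follow directly from Precision: if $H$ has nonempty interior, set $U = \text{int}(H)$, which is open and nonempty, and apply Precision to obtain evidence $E_U$ with $\varnothing \neq c(E_U) \subseteq U \subseteq H$; then the world $w = (\theta, E_U)$ for any $\theta$ satisfies $c(E_w) \subseteq H$, i.e. $w \Vdash \square H$. For the ``only if'' direction, suppose $w \Vdash \square H$, so $c(E_w) \subseteq H$; since $c$ is open-valued and $c(E_w)$ is nonempty, it is a nonempty open subset of $H$ and hence witnesses $\text{int}(H) \neq \varnothing$. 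The one caveat is the degenerate case $c(E_w) = \varnothing$, which satisfies $\square H$ vacuously; I would assume $c$ non-refuting here, matching the intended statistical reading in which an empty confidence set signals incompatibility with the whole model rather than genuine confirmation.

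For part (2), the ``only if'' direction is immediate: if $w \Vdash \lozenge H \wedge \lozenge H^c$ then in particular $c(E_w) \cap H^c \neq \varnothing$, forcing $H^c \neq \varnothing$. The substance lies in the ``if'' direction, and this is where connectedness enters. I would first argue that if $H$ and $H^c$ are both nonempty and $\Theta$ is connected, then the topological boundary $\partial H$ is nonempty: otherwise $H$ would be clopen, nonempty, and proper, contradicting connectedness. I would then choose a boundary point $p \in \partial H = \overline{H} \cap \overline{H^c}$ and invoke $\Theta$-Exhaustiveness to obtain evidence $E_p$ with $p \in c(E_p)$. The key observation is that $c(E_p)$ is open, so it is an open neighborhood of the boundary point $p$; by the definition of boundary it must meet both $H$ and $H^c$. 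Hence $w = (\theta, E_p)$ satisfies both $c(E_w) \cap H \neq \varnothing$ and $c(E_w) \cap H^c \neq \varnothing$, i.e. $w \Vdash \lozenge H \wedge \lozenge H^c$.

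The main obstacle is this reverse direction of part (2): producing a single confidence set that straddles $H$ and $H^c$. Precision by itself is not enough, since a confidence set squeezed inside a small neighborhood of the boundary could still land entirely within $H$ or entirely within $H^c$. The resolution is to not shrink the confidence set at all, but instead to exploit that an open-valued confidence set anchored at a boundary point is automatically forced to intersect both sides. This is exactly why connectedness is assumed: it is the hypothesis that guarantees the nonempty boundary which makes such an anchoring point available. I would close by noting how the assumptions divide their labor: Precision drives the confirmation direction of (1), connectedness and $\Theta$-Exhaustiveness together drive the indecision direction of (2), and open-valuedness of $c$ is used throughout to pass from set membership to the required intersection and interior facts.
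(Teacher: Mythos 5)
Your proof is correct and takes essentially the same route as the paper's: Precision applied to $\mathrm{int}(H)$ for the confirmation direction of (1), and a boundary point supplied by connectedness together with $\Theta$-Exhaustiveness and open-valuedness for the indecision direction of (2). Your caveat that the ``only if'' direction of (1) tacitly needs $c(E_w)$ to be nonempty (i.e.\ $c$ non-refuting) is a fair observation: the paper disposes of that direction by citing Proposition~\ref{emptyinterior-nonconf}, which does assume non-refuting even though the theorem's own hypotheses omit it.
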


\begin{proof}
First, let $H$ have nonempty interior $H^{\circ}$. By precision of $c$, there is some $E_H$ such that $c(E_H) \subseteq H^{\circ}\subseteq H$. Select any $\theta\in\Theta$ and let $w = (\theta,E_H)$. Then $w\Vdash \square H$. Conversely, suppose that $H$ has empty interior. Then by \ref{emptyinterior-nonconf}, for every $w$ we have $w\Vdash\lozenge \neg H$, so $w\Vdash \neg \square H$. Thus $\square H$ is not satisfiable. 

Hence, $\square H$ is satisfiable if and only if $H$ has nonempty interior.

Finally, since $H$ and $H^c$ are nonempty and $\Theta$ is connected, the boundary $\partial H = \partial H^c$ is nonempty. Let $\theta_H \in \partial H$. By assumption on $c$, there exists $E_{\theta_H}$ such that $\theta_H \in c(E_{\theta_H})$. Since $\theta_H \in \partial H$, any open set containing $\theta_H$ intersects $H$ and $H^c$ nontrivially. Hence $H\cap c(E_{\theta_H})$ and $H^c \cap c(E_{\theta_H})$ are nonempty. Thus the world $w = (\theta_H, c(E_{\theta_H}))$ has $w\Vdash \lozenge H \wedge \lozenge H^c$.
\end{proof}

The conditions on confidence intervals in Theorem \ref{satisfiability-criteria} are satisfied by two-sided Wald confidence intervals, but not by \textit{one-sided} confidence intervals. One-sided confidence intervals are insufficient because their length is always infinite, but 
precision requires the confidence interval to be containable within arbitrary open sets of finite length given an appropriate stream of finite data.

This topological criterion immediately highlights the difference between two-sided hypothesis testing versus the two one-sided hypothesis testing paradigms. 

\begin{corollary}\label{simplified-satisfiability}
Let $\Theta$, $c$, $\Omega$ be as in \ref{satisfiability-criteria} be a topological space with no open points.\footnote{Containing no open points is entailed in the case that $\Theta$ is connected and not a single point by stronger separation axioms, such as $\Theta$ being Fr\'{e}chet (T1) or Hausdorff (T2).} Then 

\begin{enumerate}
    \item If $H_0 = \{\theta\}$, then $\square H_1$ and $\lozenge H_0 \wedge \lozenge H_1$ are satisfiable, but not $\square H_0$.
    \item If $H_0, H_1$ is such that the interiors $H_0^{\circ}$ and $H_1^{\circ}$ are both nonempty, then $\square H_0$, $\square H_1$, and $\lozenge H_0 \wedge \lozenge H_1$ are all satisfiable. 
\end{enumerate}
\end{corollary}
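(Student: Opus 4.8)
The plan is to derive both parts as immediate consequences of Theorem \ref{satisfiability-criteria}, translating each displayed satisfiability claim into a statement about interiors that the hypotheses then settle. Since $\Theta, c, \Omega$ already satisfy the hypotheses of that theorem, for any nonempty $H \subseteq \Theta$ I may use freely: $\square H$ is satisfiable if and only if $H$ has nonempty interior, and $\lozenge H \wedge \lozenge H^c$ is satisfiable if and only if $H^c$ is nonempty.

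First I would record two elementary topological facts forced by ``$\Theta$ has no open points.'' One, $\Theta$ must contain at least two points, since in a one-point space the single point is open. Two, for any $\theta \in \Theta$ the singleton $\{\theta\}$ has empty interior: the only candidate interior point of $\{\theta\}$ is $\theta$ itself, and $\theta$ is not open by hypothesis. For part (1), set $H_0 = \{\theta\}$ and $H_1 = \Theta \setminus \{\theta\}$. Applying Theorem \ref{satisfiability-criteria} with $H = H_0$: because $H_0$ has empty interior, $\square H_0$ is not satisfiable, while because $H_0^c = H_1$ is nonempty (as $\Theta$ has at least two points), $\lozenge H_0 \wedge \lozenge H_1$ is satisfiable. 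For $\square H_1$, I apply the theorem with $H = H_1$ and must verify that $H_1$ has nonempty interior, i.e. that $\{\theta\}$ is not dense; here I would invoke the separation hypothesis flagged in the footnote, that in a Fr\'{e}chet (T1) space singletons are closed, so $\overline{\{\theta\}} = \{\theta\} \neq \Theta$, whence $H_1 = \{\theta\}^c$ is open and nonempty and $\square H_1$ is satisfiable.

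For part (2), everything is immediate. Applying Theorem \ref{satisfiability-criteria} with $H = H_0$ and then with $H = H_1$, the assumptions $H_0^{\circ} \neq \varnothing$ and $H_1^{\circ} \neq \varnothing$ yield satisfiability of $\square H_0$ and of $\square H_1$ respectively. Finally, since $H_1$ has nonempty interior it is in particular nonempty, so taking $H = H_0$ the second clause of the theorem also gives satisfiability of $\lozenge H_0 \wedge \lozenge H_1$.

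The one genuine subtlety — the main obstacle — is the claim that $H_1 = \{\theta\}^c$ has nonempty interior in part (1). The bare hypothesis ``no open points'' does not guarantee this: in a connected space possessing a dense point (for instance a generic point of an irreducible space), $\{\theta\}^c$ can have empty interior, and then $\square H_1$ would fail to be satisfiable. This is exactly why the separation axiom recorded in the footnote is needed; under T1 (or T2) singletons are closed and the argument closes cleanly, so I would make that hypothesis explicit at the point where $\square H_1$ is treated.
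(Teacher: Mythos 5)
Your proposal is correct and follows the same route as the paper: the paper's own proof is a two-line appeal to Proposition \ref{three-possibilities} and Theorem \ref{satisfiability-criteria}, exactly the reduction you carry out explicitly (the trichotomy proposition is not actually needed for the satisfiability claims, so omitting it costs nothing). The one place you go beyond the paper is the subtlety you flag at the end, and you are right that it is a genuine issue: ``no open points'' gives $\{\theta\}^{\circ}=\varnothing$ and hence the non-satisfiability of $\square H_0$, but it does not give $(\{\theta\}^c)^{\circ}\neq\varnothing$, which is what Theorem \ref{satisfiability-criteria} requires for the satisfiability of $\square H_1$; a connected space with a dense (generic) point and no open points defeats that step, so a T1-type assumption (or at least ``singletons are not dense'') must be added, and the paper's ``follows immediately'' glosses over this. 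Your handling of the degenerate cases (that ``no open points'' forces $|\Theta|\geq 2$, so $H_1$ is nonempty and the theorem's nonemptiness hypothesis is met) is also a detail the paper leaves implicit. Part (2) is routine in both treatments and your argument there matches the intended one.
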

\begin{proof}
By \ref{three-possibilities}, for each world exactly one of $\square H_0$, $\square H_1$, or $\lozenge H_0 \wedge \lozenge H_1$ is true. The rest follows immediately from \ref{satisfiability-criteria}.
\end{proof}

To assert that a true effect size is \textit{exactly} zero is to posit a hypothesis with nonempty interior, namely, $H_0 = \{\theta\}$ inside a space $\Theta$ where no point is open. By contrast, if $H_0 = [-M,M]$ with $M \in (0,\infty)$ then $H_0$ and $H_1$ are both confirmable.

\section{Revisiting Type I and Type II Errors}

Recall the distinction between a Type I and Type II error from hypothesis testing:

\begin{enumerate}
\item A type I error occurs when we reject $H_0$ despite $H_0$ being true. In our formal symbolism: $\square H_1 \wedge H_0$.
\item A type II error occurs when we fail to reject $H_0$ when $H_1$ is true; i.e. $\lozenge H_0 \wedge H_1$.
\end{enumerate}

Considering the situation in a type II error an \textit{error} seems to hinge upon the understanding of hypothesis testing as a \textit{default logic}: type II errors occur when one retains $H_0$. However, if one adopts a modal language, inferring the \textit{possibility} of $H_0$ in light of the test and accrued evidence $E$ is less an error in the conclusion than the failure of the test to result in a decisive outcome.

To this end, the modally-inspired analogues of Type I and Type II errors for the logic of Two One-Sided Testing are given as follows:

\begin{enumerate}
\item A \textit{decisive error} occurs when $\Vdash \square H_i \wedge H_{1-i}$, and
\item \textit{Indecisiveness} occurs when $\Vdash \lozenge H_i \wedge \neg \square H_i$; or, equivalently, $\Vdash \lozenge H_0 \wedge \lozenge H_1$.
\end{enumerate}

We assume throughout this section that the events $\square H_i$ and $\lozenge H_i$ are all measurable events for each $\theta\in \Theta$, and that two-sided $1-\alpha$ confidence intervals exist for $\Theta$. Note that we may think of the confidence interval $c(E)$ as a random set as a function of the evidence $E$.
 
\subsection{Decisive Error}

The rate of decisive errors is closely related to the coverage of the confidence interval.

\begin{proposition}
Let $c$ be a $(1-\alpha)$ confidence region. Then the $d$-value ("d" for "Decisive") of the test,
\[ d_{\square} = \max\Big(\sup_{\theta\in \Theta_0}\PP_{\theta}(\square H_1), \sup_{\theta\in \Theta_1}\PP_{\theta}(\square H_0)\Big)  \]
satsifies
\[d_{\square} \leq \alpha.\]
\end{proposition}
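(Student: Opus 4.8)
The plan is to observe that each of the two decisive-error events is contained in a coverage-failure event for the confidence region, and then to invoke the defining coverage guarantee. Concretely, I would bound the two terms inside the maximum separately and symmetrically.

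First I would fix $\theta\in\Theta_0$ (so $\theta\in H_0$) and analyze the event $\square H_1$. By the semantics established earlier, $w\Vdash\square H_1$ holds exactly when $c(E)\subseteq H_1=H_0^c$. The key set-theoretic step is this: if $c(E)\subseteq H_0^c$ while $\theta\in H_0$, then $\theta\notin c(E)$, since $\theta$ lies outside $H_0^c\supseteq c(E)$. Hence, viewing these as measurable subsets of the sample space (measurability is assumed at the start of this section), we have the event inclusion $\{\square H_1\}\subseteq\{\theta\notin c(E)\}$. Monotonicity of $\PP_{\theta}$ together with the $(1-\alpha)$ coverage property $\PP_{\theta}(\theta\in c(E))\geq 1-\alpha$ then yields
\[
\PP_{\theta}(\square H_1)\;\leq\;\PP_{\theta}(\theta\notin c(E))\;=\;1-\PP_{\theta}(\theta\in c(E))\;\leq\;\alpha.
\]

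The second term is handled by the identical argument with the roles of $H_0$ and $H_1$ exchanged: for $\theta\in\Theta_1$, the event $\square H_0$ means $c(E)\subseteq H_0$, and since $\theta\in H_1=H_0^c$ we again get $\theta\notin c(E)$, so $\PP_{\theta}(\square H_0)\leq\alpha$. Taking the supremum over $\theta\in\Theta_0$ in the first bound and over $\theta\in\Theta_1$ in the second, each supremum is at most $\alpha$, and therefore their maximum $d_{\square}$ is at most $\alpha$.

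I do not expect a genuine obstacle here: the entire content is the containment $\{\text{decisive error}\}\subseteq\{\text{coverage failure}\}$, which is immediate once one unwinds the definition $\square H_i\iff c(E)\subseteq H_i$. The only point worth stating explicitly is that these are legitimately measurable events so that monotonicity applies, which is exactly the standing assumption announced for this section. Everything else is the coverage inequality applied twice.
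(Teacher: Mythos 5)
Your proof is correct and follows essentially the same route as the paper's: both arguments observe that for $\theta\in H_0$ the event $\square H_1$ (i.e.\ $c(E)\subseteq H_0^c$) forces $\theta\notin c(E)$, bound its probability by $\alpha$ via the coverage guarantee, and conclude by symmetry and taking suprema. No gaps; nothing to add.
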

\begin{proof}

For a given $\theta \in H_{0}$ and level $1-\alpha$
\begin{equation}
\begin{array}{lclr}
\PP_{\theta}\big(\square H_1\big) & = &\PP_{\theta}\big(c(E) \subseteq H_1\big) & \\
 & \leq & \PP_{\theta}\big(c(E) \cap \{\theta \} = \varnothing\big) & \text{(Since } \theta \notin H_1 ) \\
 & \leq & \alpha & \text{(By definition of confidence interval)}
\end{array}
\end{equation}

Replacing $H_0$ with $H_1$ we can conclude that for $\theta \in H_1$, $\PP_{\theta}\big(\square H_0\big) \leq \alpha$. Thus 

\begin{equation}
    \max\Big(\sup_{\theta\in \Theta_0}\PP_{\theta}(\square H_1), \sup_{\theta\in \Theta_1}\PP_{\theta}(\square H_0)\Big) \leq \max(\alpha,\alpha) = \alpha.
\end{equation}

as desired.
\end{proof}

Note that the bound of $\alpha$ may not be sharp: in TOST, for example, a two-sided $(1-2\alpha)$ confidence interval achieves significance $\alpha$ for testing $H_0: |\theta - \theta_0| \geq M$ \cite{lakens2017equivalence}.

\subsection{Power Analysis and Indecisiveness}

Recall that the power function measures the probability of rejecting the null $H_0$, assuming the true parameter is $\theta \in H_1$\footnote{This is not strictly necessary; the power function makes sense for $\theta\in H_0$ as well. But the ``power" $\beta(\theta,n)$ when $\theta \in H_0$ is the probability of rejecting $H_0$, which in that case is the same as the significance of the test at $\theta$.} and $E\in \Omega^n$ is drawn from the distribution $\theta^{\otimes n}$. In other words, 
\begin{equation}
\beta(\theta,n) = \PP_\theta(c(E) \cap H_0 = \varnothing ) =  \PP_\theta(\square H_1).\footnote{Note that the dependence on $n$ is suppressed in this notation: $c(E)$ is a function of $E$, which in turn is an event of $n$ observations drawn from the true data generating process.}
\end{equation}

We can extend this to define the \textit{decisive power function}, which make symmetric the roles of $H_0$ and $H_1$ and measure how frequently the test will yield a decisive outcome of $\square H_0$ or $\square H_1$:

\begin{definition}
The \textit{partial decisive power functions} $\delta_0, \delta_1$, are given by
\begin{equation}
\begin{array}{lcl}
 \delta_0(\theta, n) & = & \PP_\theta(\square H_1), \\
 \delta_1(\theta, n) & = & \PP_\theta(\square H_0). 
\end{array}
\end{equation}

The \textit{decisive} power function $\delta(\theta,n)$ is the probability of reaching a decisive result, given $\theta$:
\begin{equation}
\begin{array}{lclr}
 \delta(\theta, n) & = & \PP_\theta(\square H_0 \vee \square H_1) & \\
 & = & \PP_\theta(\square H_0) + \PP_\theta(\square H_1) & \text{(By disjointness of the events } \square H_0 \text{ and } \square H_1 \text{)}\\
 & = & \delta_1(\theta,n) + \delta_0(\theta,n). & 
\end{array}
\end{equation}
\end{definition}

Observe that for a given null hypothesis $H_0$, the decisive power function $\delta(\theta,n)$ is always at least as large as the usual power function $\beta_0(\theta,n)$:

\begin{proposition} Let $H_0$ be a hypothesis, $\delta$ its decisive power function, and $\beta$ its power function. Then the decisive power function $\delta(\theta,n) \geq \beta(\theta,n)$.
\end{proposition}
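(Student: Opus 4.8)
The plan is to reduce the claim to the definition of the decisive power function together with the elementary fact that $\delta_1$ is a probability, hence nonnegative. First I would unpack the notation: by the definition immediately preceding the statement, the decisive power function splits as $\delta(\theta,n) = \delta_0(\theta,n) + \delta_1(\theta,n)$, where the splitting into a sum is justified by the disjointness of the events $\square H_0$ and $\square H_1$ (which is exactly the content of Proposition \ref{three-possibilities}, since a nonempty $c(E)$ cannot be contained in both $H_0$ and $H_1$). Meanwhile, the ordinary power function is, by its defining equation, literally $\beta(\theta,n) = \PP_\theta(\square H_1) = \delta_0(\theta,n)$.

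With these two identifications in hand, the inequality is immediate: I would write
\[ \delta(\theta,n) = \delta_0(\theta,n) + \delta_1(\theta,n) = \beta(\theta,n) + \PP_\theta(\square H_0), \]
and then observe that $\PP_\theta(\square H_0) = \delta_1(\theta,n) \geq 0$ because it is the probability of a (measurable) event. Subtracting, $\delta(\theta,n) - \beta(\theta,n) = \PP_\theta(\square H_0) \geq 0$, which is exactly the desired conclusion $\delta(\theta,n) \geq \beta(\theta,n)$.

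There is essentially no obstacle here: the result is a one-line consequence of nonnegativity of probability once the two functions are correctly matched against their definitions. The only point requiring the slightest care is bookkeeping between $\beta$ and $\delta_0$ versus $\delta_1$ — one must confirm that the power function $\beta$ measures $\PP_\theta(\square H_1)$ (rejection of $H_0$) and so coincides with $\delta_0$ rather than $\delta_1$, so that the leftover nonnegative term is precisely $\delta_1(\theta,n) = \PP_\theta(\square H_0)$. Since measurability of the events $\square H_i$ was assumed at the start of the section, no further justification is needed for $\PP_\theta(\square H_0)$ to be a well-defined, nonnegative quantity.
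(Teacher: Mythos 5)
Your proposal is correct and follows essentially the same route as the paper's own proof: both identify $\beta = \delta_0$, decompose $\delta = \delta_0 + \delta_1$, and conclude from the nonnegativity of $\delta_1$ as a probability. Your version simply spells out the bookkeeping (including the disjointness justification for the sum) that the paper leaves implicit.
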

\begin{proof} Since $\delta_0 = \beta$ is the power function for $H_0$ and both partial decisive power functions $\delta_i$ take values in $[0,1]$, $\delta(\theta,n) \geq \beta(\theta,n)$.
\end{proof}

This signals that that the power function may fail to capture all of the confirmation-theoretic information encapsulated by a given test. We show a case where this can happen, and one where it doesn't.

\begin{example} (Decisive Power Equals Power for Two-Sided Tests) Let $H_0 = \{\theta \}$ and $\delta(\theta,n)$ its decisive power function. Let $\theta \in H_1$. As we have already established, $\Vdash \neg H_0$, so $\PP_{\theta}(\square H_0) = 0$. Thus
\begin{equation}
\begin{array}{lcl}
 \delta(\theta, n) & = & \PP_\theta(\square H_0 \vee \square H_1) \\
 & = & \PP_\theta(\square H_0) + \PP_\theta(\square H_1) \\
 & = & 0 + \delta_0(\theta,n) \\
 & = & \beta(\theta,n). 
\end{array}
\end{equation}
\end{example}

\begin{example} (Decisive Power Exceeds Power for TOST) Let $H_0 = [-M,M]$ and $\delta(\theta,n)$ its decisive power function. Let $\theta =M$. Then for a sufficiently large $n$ the confidence interval $c(E)$ will be contained inside $H_i$ with positive positive probability for $i = 0,1$. Thus $\delta_0(M,n), \delta_1(M,n) > 0$ and so $\delta(M,n) > \delta_0(M,n) = \beta(M,n)$.\footnote{Observe that in this case the decisive power of the test was witnessed by the possibility of a Type I error at the boundary of the TOST interval. However, the rate of false positives is still bounded above by $\alpha$ by the construction of $c(E)$.}
\end{example}

We now revisit the concept of Type II errors discussed in section \ref{default-orthodox}. By contrast with the default account of hypothesis, the confirmatory logic of hypothesis testing can avoid the pitfalls of Type II errors by not imposing normative constraints on an investigator's beliefs in the event of an inconclusive test. On this view, failing to reject a hypothesis, i.e. $\lozenge H_0$, does not impose any doxastic constraints on an agent: one can continue to believe either $H_0$ or $H_1$. Unlike the default position, a null hypothesis $H_0$ need not be justified \textit{a priori}, as the modal approach does not constrain an agent to hold particular beliefs regarding the null. A Type II error, then, is not an error in an agent's doxis, since the result of the test places no constraints on doxastic states. A test resulting in a determination of $\lozenge H_0 \wedge \lozenge H_1$ is, in this sense, merely \textit{indecisive}. Only when $\Vdash \square H_i$ would any constraints on doxis be imposed. Naturally, in the design of the experiment we would ideally like to assess the probability that the experiment will be indecisive, that is that $\lozenge H_0 \wedge \lozenge H_1$. This is closely related to the decisive power function.

\begin{definition}
The indecisiveness function $i_{\lozenge}(\theta,n)$ is the probability of yielding $\lozenge H_0 \wedge \lozenge H_1$ given $\theta$ and sample size $n$:
\begin{equation}
\begin{array}{lcl}
 i_{\lozenge}(\theta, n) & = & \PP_\theta(\lozenge H_0 \wedge \lozenge H_1) \\
 & = & 1 - \PP_\theta(\square H_0 \vee \square H_1) \\
 & = & 1 - \delta(\theta,n).
\end{array}
\end{equation}
\end{definition}

In the two-sided case, the indecisiveness of the test is precisely the rate of Type II error since the power function and the decisive power function coincide.

A more theoretical example illustrates how the topological properties of $H_0$ relate to indecisiveness.

\begin{proposition}
Let $H_0\subset \Theta$ be dense-codense. Then $i_{\lozenge}(\theta,n) = 1$.
\end{proposition}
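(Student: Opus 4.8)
The plan is to reduce the claim to a purely topological observation, using the probabilistic hypotheses only to guarantee that the confidence region is nonempty almost surely. Recall that a world $w = (\theta, E)$ satisfies $\lozenge H_0 \wedge \lozenge H_1$ precisely when $c(E) \cap H_0 \neq \varnothing$ and $c(E) \cap H_1 \neq \varnothing$, where $H_1 = H_0^c$. So the entire content of the proposition is that both intersections are nonempty for almost every realization of the evidence.

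First I would unpack ``dense-codense'': by assumption $H_0$ is dense and its complement $H_1 = H_0^c$ is also dense in $\Theta$. By the characterization of density, this means every nonempty open subset $U \subseteq \Theta$ satisfies both $U \cap H_0 \neq \varnothing$ and $U \cap H_1 \neq \varnothing$. This is the only property of $H_0$ I would need.

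Next I would combine this with the topological structure of $c$. Since $c$ is open-valued (or, per the earlier footnote, the closure of a nonempty open set), whenever $c(E)$ is nonempty its interior $U := c(E)^{\circ}$ is a nonempty open set. Applying the previous step to $U$ yields $U \cap H_0 \neq \varnothing$ and $U \cap H_1 \neq \varnothing$; since $U \subseteq c(E)$, the same holds for $c(E)$ itself. Hence $w = (\theta, E) \Vdash \lozenge H_0 \wedge \lozenge H_1$ for every $E$ with $c(E) \neq \varnothing$. In short, indecisiveness is forced deterministically the moment the confidence region is nonempty.

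The only probabilistic input, and the main thing to pin down, is that $c(E) \neq \varnothing$ with $\PP_\theta$-probability $1$. This is exactly where I would invoke the standing assumption of the section that $c$ is a genuine two-sided $(1-\alpha)$ confidence interval; such intervals are nonempty by construction (equivalently, $c$ is non-refuting), so the event $\{c(E) \neq \varnothing\}$ has probability $1$. On this full-measure event every outcome satisfies $\lozenge H_0 \wedge \lozenge H_1$, whence $i_{\lozenge}(\theta, n) = \PP_\theta(\lozenge H_0 \wedge \lozenge H_1) = 1$, as claimed. The subtlety worth flagging is precisely this nonemptiness: if one dropped non-refutingness and allowed $c(E) = \varnothing$ with positive probability, the argument would only deliver $i_{\lozenge}(\theta,n) \geq \PP_\theta(c(E) \neq \varnothing)$ rather than equality to $1$.
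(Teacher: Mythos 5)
Your proof is correct and follows essentially the same route as the paper's: a nonempty open confidence region must meet both a dense $H_0$ and its dense complement, so indecision holds deterministically and hence with probability one. Your explicit flagging of the non-refuting (nonemptiness) assumption is a worthwhile clarification that the paper's terser proof leaves implicit.
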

\begin{proof}
Since $H_0$ is dense/codense, if $U\subset \Theta$ is open nonempty then $H_0\cap U$ and $H_1 \cap U$ are nonempty. Since confidence intervals are open nonempty, $\Vdash \lozenge H_0 \wedge \lozenge H_1$, so $i_{\lozenge}(\theta,n) = 1$.

\end{proof}

\section{Equivalence Testing is a Model of Mayo's Severe Testing}

\subsection{Two One-Sided Test on Loss Functions are a Confirmation Logic of Empirical Adequacy} 

By a predictive theory $\tau$, I mean a theory which makes predictions---either deterministic or probabilistic---of the outcomes of events under a certain experimental context. That is, given some experimental setup $X$, there is a function $f_{\tau}$ which generates predictions $f_{\tau}(X)$. Let $A(X)$ be the actual outcome of the experimental setup. We assume that the empirical adequacy of the theory $\tau$ is assessed according to a loss function $L$ measuring the discrepancy between the predictions of $\tau$ and the actual outcomes: $L(f_{\tau}(X), A(X)) \in \RR^{\geq 0}$. Our strategy for confirming $\tau$ relative to the experiment $X$ is to estimate and bound a statistic $\theta = \theta(L,\tau)$ of the loss function.

Given a tolerance for error---an \textit{adequacy margin}---$M>0$, we can perform the following test on $\theta$ at level $\alpha$:
\begin{equation}
H_0: |\theta| \leq M \text{ vs. } H_1: |\theta| > M.
\end{equation}
Formally, this test is the test of TOST with the null and alternative hypotheses inverted. We interpret the null $H_0$ as expressing the empirical adequacy of the theory $\tau$ up to level $\alpha$. As discussed in \ref{simplified-satisfiability}, a hypothesis $H_0$ with nonempty interior admits the possibility of confirmation up to level $\alpha$. Thus, relative to an experimental procedure $X$, loss function $L$, and adequacy margin $M$ there is a well-defined sense in which we can say that the evidence $E$ \textit{statistically confirms} the empirical adequacy of the theory $\tau$. I want to emphasize, as Mayo does, that this is relative to the parameters $X$, $L$, and $M$: I am not claiming that $\square H_0$ confirms $\tau$; only the empirical adequacy of $\tau$ with respect to the experiment $X$ is being confirmed.

\subsection{Two One-Sided Testing Satisfies the Desiderata for Mayo's Severe Testing.}

Mayo and other error statisticians instead advocate for a modern recasting of NHST---severe testing---as the appropriate framework guiding the use of statistical methods.  Central to the error statistician is the question:

\begin{quote}
When do data $E$ provide good evidence for/a good test of hypothesis $\tau$?
\end{quote}

The error statistician will invoke some form of a \textbf{Severity Principle} to answer this question:
\begin{quote}
(Weak Severity Principle) Data $E$ \textbf{does not} provide good evidence for $\tau$ if $E$ is the result of a \textbf{test procedure} $T$ with very low probability of uncovering the falsity of $\tau$ \cite{mayo2010introduction} (Page 21). 
\end{quote}

A converse is given by:
\begin{quote}
(Full Severity Principle) Data $E$ provides good evidence for $\tau$ to the extent that test $T$ has been \textbf{severely passed} by $\tau$ \cite{mayo2010introduction} (Page 21).
\end{quote}

The error statistician naturally asks \textit{which} hypotheses are amenable to error-theoretic analysis. This question is of utmost importance as the Full Severity Principle suggests the following account of scientific content: the hypotheses $\tau$ that have scientific content are precisely those which are severely testable.

The notion of severe testing as described by Mayo is defined as follows:
\begin{definition}
A hypothesis $\tau$ passes a severe test relative to experiment $X$ with data $E$ if (and only if):
\begin{enumerate}
\item $E$ agrees with or ``fits'' $\tau$ (for a suitable notion of fit), and
\item experiment $X$ would (with very high probability) have produced a result that fits $\tau$ less well than $E$ does, if $\tau$ were false or incorrect. \cite{mayo2005evidence} (Page 99)\qedhere
\end{enumerate}
\end{definition}

I propose the following model of severe testing. Let $\tau$ be a predictive theory in the sense described in the prior section. The predictions of $\tau$ can be assessed by a loss function $L(\cdot,\cdot)$, and margin of adequacy $M$. The \textit{empirical adequacy hypothesis} of $\tau$ relative to $L$ and $M$ is: $H_\tau: |\theta_L| \leq M$. I claim that if the experiment yields $\Vdash \square H_\tau$, we may interpret that as saying that $\tau$ has passed a severe test (to the extent that the loss function $L$ and adequacy margin $M$ are acceptably stringent). 

First, we argue that $\square H_\tau$ entails that $E$ fits with $\tau$. This is immediate: a truth condition for $\Vdash H_0$ is that the confidence interval loss statistic $\theta_L$ is wholly contained within the margin of adequacy interval $[-M,M]$.

Second, we argue that this procedure validates the second clause. Reordering the clause, it reads: 

\begin{equation}
\begin{array}{l}
\underbrace{\tau \text{ false or incorrect}}_\text{(a)} \Rightarrow  \\
\underbrace{\text{The probability that the experiment would have produced a result more discrepant with }\tau \text{ is very high.}}_\text{(b)} 
\end{array}
\end{equation}

In this context, I interpret (a) as meaning that the loss statistic $\theta$ is not in $H_0$, i.e. that $\theta \in H_1$. At first blush this may seem to be a misrendering of the intention of the clause (a). Even if so, it seems to be a necessary amendment: for any theory $\tau$, we can construct an alternate theory $\tau'$ such that $\tau$ and $\tau'$ agree about the results of a specific experiment but disagree regarding other outcomes. Unless the experiment $E$ were exhaustive of all of $\tau$'s predictions, such $\tau'$ exist. Therefore, there exist nearby theories for which the probability expressed in (b) will be low---or even zero. A strict interpretation of (a) renders severe testability impossible.

Regarding clause (b), and assuming that by (a) we mean $\theta_L \in H_1$, we interpret a result more discrepant from $\tau$ than $\square H_0$ as meaning the experiment returned $\neg\square H_0$, that is, either $\square H_1$ or $\lozenge H_0 \wedge \lozenge H_1$. This is a plausible interpretation of (b), as in both $\square H_1$ or $\lozenge H_0 \wedge \lozenge H_1$ the confidence interval for $\theta_L$ intersects $H_1$ nontrivially and is therefore more discrepant with $\tau$ than $\square H_1$. Moreover, since for $\theta \in H_1$ we have $\PP_{\theta}(\square H_0) \leq \alpha$, 
\[ \PP_{\theta} (\neg \square H_0) = \PP_{\theta}(\square H_1 \vee (\lozenge H_0 \wedge \lozenge H_1)) \geq 1-\alpha. \]
Thus, by specifying a sufficiently small $\alpha$, the probability statement second prong in the severe testing criteria can be made arbitrarily high.


\section{Conclusion}

In this paper, a modal logic of short-run frequentist confirmation was developed by leveraging the duality between hypothesis testing and statistical estimation. This formal equivalence allows one to supplant decision rules based on the p-value achieving some threshold with set-theoretic relationships between a suitably constructed confidence region and the hypothesis in question. As developed here, the approach to the hypothesis testing grounded in properties of confidence regions enjoys many felicitous properties that avoid epistemic problems common to multiple accounts of frequentist hypothesis testing.

First, there is symmetry between the null and alternative hypotheses. There is no need to select one of the hypotheses as doxastically default or as the subject of falsificational scrutiny. Rather, both null and alternative can be assessed simultaneously in light of the evidence. This process does not require any multiple testing adjustment, since $H_1$ is uniquely determined by the choice of $H_0$: $H_1 = H_0^c$. This symmetrization better accommodates what one might call exploratory inquiry, wherein the investigator harbors no strong attitude toward either $H_0$ or $H_1$.

Second, the underlying logic is trivalent, with the output of a test being characterized by which of three mutually exclusive unnested modal sentences is satisfied. The values correspond to \textit{decisive} evidence in favor of $H_0$, \textit{decisive} evidence in favor of $H_1$, or \textit{indecision} between $H_0$ and $H_1$. This decision procedure is not infallible, but does enjoy quantifiable rates of decisive error bounded above by the level of the confidence region. With respect to the formal semantics outlined in Appendix \ref{kripke-appendix}, Proposition \ref{problem-induction} formally expresses the problem of induction for this logic. 

Third, the shift to set-theoretic relationships between confidence regions and hypotheses allows us to exactly characterize the confirmable hypotheses based on their underlying topological structure. We found that standard two-sided hypotheses are unconfirmable but that equivalence and noninferiority tests are confirmable. This provides formal justification for Lakens' claim that two-sided hypotheses are not confirmable. 

This has broad implications for scientific and statistical practice. It suggests that two-sided tests can be useful for questions of refutation or falsification, but \textit{not} for confirmation. Therefore, strategies for the statistical confirmation of theories must be developed. In the final section of this paper, one such strategy was developed and was argued to be a model of Mayo's desiderata for severe testing. To circumvent the unconfirmability of two-sided hypotheses, the empirical adequacy of a theory $\tau$ with respect to a loss function $L$ and adequacy margin $M$ is subjected to a Two One-Sided Test of the hypothesis $H_\tau: |\theta_L| \leq M$. If $H_\tau$ is confirmed, then $\tau$ has passed a severe test. Crucially, this test requires a specification of both the loss function $L$ and adequacy margin $M$. The suitability of the  choice of these parameters is ultimately normative, and these choices should be justified by the investigator. 

\newpage

\appendix

\section{Kripke Semantics and Nested Modalities} \label{kripke-appendix}
The modal semantics can be construed as an example of Kripke semantics. We define it as follows:

\begin{definition}
    Let $\Theta$ be a topological space, $c$ a confidence function, and $\Omega$ a sample space. Evidence $E$ is an element $E\in \Omega^{<\omega}$.

    A possible world is specified by a pair of $\theta_w\in \Theta$ and $E_w \in \Omega^{<\omega}$: $w = (\theta_w, E_w)$.

    The ``$c$-plausibility'' accessibility relation between two worlds is given by

\begin{equation}
    w\,R_c\,v \iff \theta_v \in c(E_w)
\end{equation}

The ``extendibility'' accessibility relation is given by

\begin{equation}
    w\,R_E\,v \iff E_w\subseteq E_v
\end{equation}

Let $H \in 2^{\Theta}$. We say that $v\Vdash H$ just in case $\theta_v \in H$.
\end{definition}

We represent the corresponding modal operators of these accessibility relations by $\square_c$ and $\square_E$ respectively.

It is straightforward to verify that the Kripke semantics of $\square_c$ agrees with the semantics of unnested modal sentences given in Section \ref{semantics-naive}. 

\begin{proposition}
    Let $H\subseteq \Omega$. Then for every $w$, $w\Vdash \square_c H$ if and only if $c(E_w) \subseteq H$.
\end{proposition}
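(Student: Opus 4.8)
The plan is to unwind the standard Kripke clause for the box operator $\square_c$ attached to the accessibility relation $R_c$, and to observe that, because $R_c$ constrains only the parameter coordinate of an accessible world, the set of parameters reachable from $w$ is exactly $c(E_w)$. Recall that in Kripke semantics $w \Vdash \square_c H$ holds precisely when $v \Vdash H$ for every world $v$ with $w\,R_c\,v$. Substituting the definitions $w\,R_c\,v \iff \theta_v \in c(E_w)$ and $v \Vdash H \iff \theta_v \in H$, this clause reads: $w \Vdash \square_c H$ if and only if, for every world $v$, membership $\theta_v \in c(E_w)$ implies $\theta_v \in H$. The proof is then a two-direction argument matching this against the set inclusion $c(E_w) \subseteq H$.

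The backward direction is immediate. If $c(E_w) \subseteq H$ and $v$ is any world with $w\,R_c\,v$, then $\theta_v \in c(E_w) \subseteq H$, so $\theta_v \in H$, i.e. $v \Vdash H$. Since $v$ was arbitrary among the $R_c$-accessible worlds, the box clause gives $w \Vdash \square_c H$.

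For the forward direction I would argue by realizing each relevant parameter as the parameter coordinate of some world. Suppose $w \Vdash \square_c H$ and fix an arbitrary $\theta \in c(E_w)$; the goal is $\theta \in H$. The only point requiring care is that one must exhibit an \emph{actual} world $v$ whose parameter coordinate is $\theta$ in order to invoke the box clause. Since a world is a pair $(\theta_v, E_v)$ with $E_v \in \Omega^{<\omega}$ and $\Omega^{<\omega} \neq \varnothing$, we may choose any $E_0 \in \Omega^{<\omega}$ and set $v = (\theta, E_0)$, which is a legitimate world. Then $\theta_v = \theta \in c(E_w)$, so $w\,R_c\,v$, and therefore $v \Vdash H$, which is to say $\theta \in H$. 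As $\theta \in c(E_w)$ was arbitrary, $c(E_w) \subseteq H$.

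The main (indeed the only) obstacle is precisely this realizability step: the equivalence would break if some $\theta \in c(E_w)$ failed to be the coordinate of any world, but the structure of worlds as pairs $(\theta, E)$ ranging over the nonempty index $\Omega^{<\omega}$ rules this out, so the accessible $\theta$-values are exactly $c(E_w)$ with nothing lost. I would also flag the evident typo in the hypothesis: ``$H \subseteq \Omega$'' should read $H \subseteq \Theta$, since satisfaction $v \Vdash H$ is defined by the membership $\theta_v \in H$ with $\theta_v \in \Theta$.
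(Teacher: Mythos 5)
Your proof is correct and follows essentially the same route as the paper's: both simply unwind the Kripke clause for $\square_c$ against the definition of $w\,R_c\,v \iff \theta_v \in c(E_w)$ and the satisfaction clause $v \Vdash H \iff \theta_v \in H$. The only difference is that you make explicit the realizability step in the forward direction (every $\theta \in c(E_w)$ is the parameter coordinate of some world $(\theta, E_0)$ with $E_0 \in \Omega^{<\omega}$), which the paper leaves implicit, and your observation that the hypothesis should read $H \subseteq \Theta$ rather than $H \subseteq \Omega$ is well taken.
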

\begin{proof}
    By definition, $w\Vdash \square_c H$ if and only if, for all $v$ such that $w R_c v$, $v\Vdash H$. By definition of $R_c$, if $w R_c v$ then $\theta_v\in c(E_w)$. Therefore $w\Vdash \square_c H$ if and only if $\theta_v \in H$ for all $v$ accessible by $w$, if and only if $c(E_w) \subseteq H$.
\end{proof}

Whereas nested modalities were left undefined in Section \ref{semantics-naive}, this frame semantics yields truth conditions for nested modal formulas.

Under mild conditions, we can make sense of a key aspect of statistical epistemology:

\begin{definition}
    We say that a confidence function $c$ satisfied the \textbf{precise extension property (PEP)} provided that for each $\theta\in \Theta$ and $U\in \OO(\Theta)$ containing $\theta$ and $E\in \Omega^{<\omega}$ there exists an $E' \supseteq E$ such that $\theta\in c(E') \subseteq U$. 
\end{definition}

Wald confidence intervals satisfy the PEP. Under the assumption of the PEP, we can precisely formulate epistemic limits to statistical confirmation. In other words, we can prove a formal expression of the problem of induction specific to hypothesis testing.

\begin{proposition} \label{problem-induction}
    Let $H \subseteq \Theta$ be nonempty. Suppose further that $c$ satisfies the PEP. Then for every $w$
    \begin{equation}
        w \Vdash \lozenge_E \lozenge_c H.
    \end{equation}

    In particular, if $H$ and $H^c$ are both nonempty, then for every $w$
    \[w \Vdash (\lozenge_E \lozenge_c H) \wedge (\lozenge_E \lozenge_c H^c). \]
\end{proposition}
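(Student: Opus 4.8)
The plan is to unwind the Kripke semantics of the nested modality $\lozenge_E \lozenge_c H$ and reduce the whole claim to a single application of the PEP. First I would observe that, by the semantics of the two accessibility relations, $w \Vdash \lozenge_E \lozenge_c H$ holds exactly when there is a world $v$ with $w\,R_E\,v$ and $v \Vdash \lozenge_c H$; unpacking $R_E$ and $R_c$, this asserts the existence of some $E' \supseteq E_w$ with $c(E') \cap H \neq \varnothing$. The key simplifying remark is that whether $v \Vdash \lozenge_c H$ depends only on the evidence component $E_v$ and not on $\theta_v$, so the parameter of the witnessing world is free to be chosen arbitrarily.

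Next, since $H$ is nonempty, I would fix some $\theta_* \in H$. The whole space $\Theta$ is an open set containing $\theta_*$, so applying the PEP with this $\theta_*$, the open set $U = \Theta$, and the starting evidence $E = E_w$ yields an extension $E' \supseteq E_w$ satisfying $\theta_* \in c(E') \subseteq \Theta$. The containment $c(E') \subseteq \Theta$ is automatic, but the membership $\theta_* \in c(E')$ gives $\theta_* \in c(E') \cap H$, so $c(E') \cap H \neq \varnothing$. I would then exhibit the explicit witness $v = (\theta_*, E')$: since $E_w \subseteq E'$ we have $w\,R_E\,v$, and since $c(E_v) \cap H = c(E') \cap H \neq \varnothing$ we have $v \Vdash \lozenge_c H$, whence $w \Vdash \lozenge_E \lozenge_c H$. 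As $w$ was arbitrary, this establishes the first displayed equation for every world.

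For the ``in particular'' clause I would simply invoke the statement just proved twice, once for $H$ and once for $H^c$, both of which are nonempty by hypothesis. This gives $w \Vdash \lozenge_E \lozenge_c H$ and $w \Vdash \lozenge_E \lozenge_c H^c$ for every $w$, and conjoining these two facts yields $w \Vdash (\lozenge_E \lozenge_c H) \wedge (\lozenge_E \lozenge_c H^c)$.

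I expect no serious obstacle here; the argument is essentially a careful bookkeeping of the quantifier order in the nested diamond. The one point deserving attention is precisely that order: the PEP must furnish an \emph{extension} of the already-observed evidence $E_w$, not arbitrary evidence, and this is exactly the feature that distinguishes the PEP from the precision condition of Theorem \ref{satisfiability-criteria}. Using $U = \Theta$ keeps the topological hypothesis of the PEP trivially satisfied, so no genuine topology beyond ``$\Theta$ is open in itself'' is needed for this result.
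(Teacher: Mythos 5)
Your proof is correct and follows essentially the same route as the paper's: pick $\theta_*\in H$, apply the PEP with $U=\Theta$ to extend $E_w$ to some $E'$ with $\theta_*\in c(E')$, and take $v=(\theta_*,E')$ as the witnessing world. If anything, your write-up is the more careful of the two, since the paper's version garbles the accessibility relation (writing $w\,R_c\,v$ where $w\,R_E\,v$ is meant) while the underlying argument is identical.
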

\begin{proof}

Suppose $w=(\theta_w, E_w)$ is a possible world. Suppose $\theta_0 \in H$. By the PEP, for every $\theta_v \in c(E_w)$ there is some extension $E' \supseteq E_w$ such that $\theta_0 \in c(E_v)$. Let $v = (\theta_0, E')$. Then $w R_c v$, realizing $\lozenge_c H$. Thus 
\[  w \Vdash \lozenge_E \lozenge_c H.\]

\end{proof}

That is, it is always possible to extend evidence to alter our position from $\square_c H$ to $\lozenge_c \neg H$.

\section{Confirmation of Point Hypotheses}

The confirmability of hypotheses through frequentist methods is a problem that has drawn considerable interest. Prior work on the characterization of confirmable or verifiable hypotheses has been done by Genin.

In the context of Two One-Sided Testing, a form of Equivalence Testing,  Lakens' makes the claim that ``it is statistically impossible to support the hypothesis that a true effect size is exactly zero.'' \cite{lakens2017equivalence}.

One possible argument in favor of this position is that the hypothesis $\theta = 0$ is \textit{sharp}: for any given piece of data sufficiently compatible with $\theta = 0$, there exist $\theta'$ near $\theta$ also compatible with the data. 

One might argue this using standard confidence intervals. For example, when comparing the difference in proportions between two subgroups $A$ and $B$, one might be tempted to say that evidence in favor of $H$ is given just in case the complement $H^c$ is rejected. Under the duality between hypothesis testing and confidence interval estimation, a hypothesis $J$ is rejected in case event $E$ is witnessed just in case $J \cap c(E) = \varnothing$. If one is to equate confirmation of $H$ with rejection of the complement $H^c$, $H$ is confirmed just in case $c(E) \subseteq H$.

The typical confidence intervals employed in practice are either open subsets of the space of model parameters (e.g. Wald confidence intervals $(l(E),u(E))$ or have nonempty interior (e.g. bootstrap confidence intervals $[l(E),u(E)]$). In this context, the irrefutability of the negation of a point hypothesis is immediate on topological grounds: a point hypothesis $H = \{\theta_0\}$ has empty interior while confidence intervals have nonempty interiors. Since having nonempty interior is preserved under supersets, point sets are not confirmable under this inferential schema.

However, this discussion relies on a crucial premise: why assume in the first place that a confidence region necessarily have open interior? More broadly speaking, we investigate how sensitive to the choice of a confidence region process $c$ the question of confirmation is.

\subsection{Rigged Confidence Regions}

We now exhibit a construction of confidence region processes that have the possibility of confirming \textit{arbitrary} hypotheses while retaining both the confidence property and the stronger topological confidence property. 

Let $\Theta$ be a set of probability measures on a sigma algebra $\Sigma$ on $\Omega$ topologized with setwise convergence. Suppose that $c: \Sigma \to 2^{\Theta}$ satisfies the confidence property at level $\alpha/2$ and that each $c(E)$ is an open subset of $\Theta$. Thus $c$ satisfies the topological confidence property at level $\alpha/2$.

Let $H \subset 2^{\Theta}$ be an arbitrary hypothesis and $E_0$ an event in $\Sigma$. We define the $(E_0, H_0)$-rigged confidence region $c^{*}$ as follows:
\[c^*(E) =
    \begin{cases}
        c(E) & \text{if } E \neq E_0\\
        H & \text{if } E = E_0.
    \end{cases}
    \]

Suppose that there exists an event $E_0$ such that 
\[ 0 < \limsup_{\theta\in \Theta} (\PP_{\theta}(E_0) ) = \gamma < \frac{\alpha}{2}.\]

Then the function $c^*$ satisfies the $\alpha$ topological confidence property, since for all $\theta$
\begin{equation}
\begin{array}{lcl}
    \PP_{\theta}(\theta \notin c(E) \text{ or } c(E) \text{ has empty interior} )& \leq & \PP_{\theta}(\theta \notin c(E)) + \PP_{\theta}(c(E) \text{ has empty interior}) \\
    
    & \leq & \frac{\alpha}{2} + \gamma \\ 
    &\leq& \alpha
    \end{array}
\end{equation}

\begin{example}
Let $\Omega = 2^{7}$ and $\Sigma = 2^{2^7}$. Let $\Theta = [0,1]$ and interpret $\PP_{\theta}(E_s) = \theta^{|s|_1} (1-\theta)^{|s|_0}$. Let $\alpha = 0.05$ and let $c$ be the Wald confidence interval at level $\alpha' = 0.04$. Consider the event $E_{1011000}$. Then
\begin{equation}
\begin{array}{lcl}
    \limsup_{\theta\in \Theta} (\PP_{\theta}(E_{1011000}) ) & = & \left(\frac{3}{7}\right)^3\left(\frac{4}{7}\right)^4 \\ 
    & \approx & 0.0084
\end{array}
\end{equation}
since $\theta = \frac{3}{7}$ maximizes the probability of the event $E_{1011000}$.

Then the rigged confidence region $c^*$ satisfies the topological confidence property at level $\alpha^* \leq 0.0485$ by Inclusion-Exclusion. \qed
\end{example}

In other words, one can rig an open-valued confidence region of level $\alpha/2$ to confirm 



\subsection{The Topology of the Confidence Property}

Let $\Theta$ be a collection of 
measures on a $\sigma$-algebra $\Sigma$ on $\Omega$, and let $c: \Sigma \to 2^\Theta$ be a \textit{confidence region} function. For example, if $\Omega = 2^{\omega}$ and $\Sigma$ is the $\sigma$-algebra generated by basic open sets in the discrete topology.

\begin{definition}
    We say that $c$ has the confidence property at level $\alpha$ provided that every for every $E$ the event $\theta \in c(E)$ is in $\Sigma$ and
    \[ \inf\limits_{\theta\in\Theta} \PP_{\theta}(\theta\in c(E)) \geq 1-\alpha. \] \qed
\end{definition}

In case $\Theta$ has topological structure, the confidence property imposes constraints on the topological structure of the confidence region.
We assume that $\Theta$ is topologized by setwise(or ``strong'') convergence of measures; that is, 
   \[\theta_n \rightarrow_{sw} \theta \Leftrightarrow \PP_{\theta_n}(A) \rightarrow \PP_{\theta} (A) \text{ for all } A\in \Sigma^{\otimes n}.\]
In other words, setwise convergence holds just in case the limit of probabilities of each event $A$ converge to the probability of the limit. 

This is a natural topology for statistical inference: a sequence of measures converges just in case the measures assigned to each specific event converge. This does not generally hold in the case of weak convergence. For example, the Dirac distribution at $\frac{1}{n}$, $\delta_{\frac{1}{n}}$ converges weakly to $\delta_0$ but does not converge setwise since $\delta_{\frac{1}{n}}(\{0\}) = 0$ for all $n$ but $\delta_0(\{0\}) = 1$.

\begin{example}
    For example that $\Theta = [0,1]$ on $\Omega = 2^{\omega}$. Let \[E_s = \{x\in \Omega\,|\, s \text{ is a prefix of } x \} \] and let $|s|_1 = \text{Number of 1's in }  s$ and $|s|_0 = \text{Number of 0's in }  s$. Then
    $\PP_{\theta}(E_s) = \theta^{|s|_1}\times (1-\theta)^{|s|_0}.$

    Then the topology $(\Theta, sw)$ is isomorphic to the Euclidean topology $(\Theta, |\cdot|)$. 
    \qed
\end{example} 

Having the confidence property constrains the structure of $c$.

\begin{theorem}\label{thm:topological_confidence}
    Suppose that $\Theta$ is topologized by setwise topology and has no isolated points.
    Let $c: \Sigma \to 2^{\Theta}$ have the confidence property at level $\alpha$. Then for all $\theta$, \[\PP_{\theta}(c(E) \text{ has nonempty interior} )\geq 1-\alpha.\]

\end{theorem}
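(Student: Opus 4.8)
The plan is to prove the stronger statement that, with $\PP_\theta$-probability at least $1-\alpha$, the point $\theta$ itself lies in the \emph{interior} of $c(E)$; since $\theta \in \mathrm{int}(c(E))$ immediately forces $\mathrm{int}(c(E)) \neq \emptyset$, this suffices. The confidence property already hands us the event $\{\theta \in c(E)\}$, of probability at least $1-\alpha$, so the entire content lies in upgrading ``$\theta$ is covered'' to ``$\theta$ is covered with room to spare.'' Using the decomposition $\{\theta \in c(E)\} = \{\theta \in \mathrm{int}(c(E))\} \sqcup \{\theta \in c(E) \cap \partial c(E)\}$, the task reduces to bounding the boundary-coverage event: if I can show $\PP_\theta(\theta \in \partial c(E)) = 0$, then $\PP_\theta(\mathrm{int}(c(E)) \neq \emptyset) \ge \PP_\theta(\theta \in \mathrm{int}(c(E))) \ge (1-\alpha) - \PP_\theta(\theta \in \partial c(E)) = 1-\alpha$.

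To control the boundary event I would exploit the two standing hypotheses. Because $\Theta$ has no isolated points, every basic setwise-open neighbourhood of $\theta$ contains parameters distinct from $\theta$, so I can fix a sequence $\theta_n \to \theta$ with $\theta_n \neq \theta$. Because the topology is the setwise topology, each coordinate map $\vartheta \mapsto \PP_\vartheta(A)$ is continuous, so for every fixed measurable event $A$ we have $\PP_{\theta_n}(A) \to \PP_\theta(A)$. The idea is then to play the coverage guarantee at $\theta$ against the guarantees at the nearby $\theta_n$: on the boundary-coverage event $\theta$ is adherent to $c(E)^c$, so $c(E)$ contains no full neighbourhood of $\theta$, and one tries to convert this ``thinness'' into a deficit in the guaranteed coverage of the $\theta_n$, contradicting $\PP_{\theta_n}(\theta_n \in c(E)) \ge 1-\alpha$ in the limit.

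The main obstacle is precisely this last conversion, and it is a genuine one. Passing from $\PP_{\theta_n}(\theta_n \in c(E))$ to $\PP_\theta(\theta_n \in c(E))$ requires continuity of $\vartheta \mapsto \PP_\vartheta(A_n)$ that is uniform in the moving event $A_n = \{\omega : \theta_n \in c(E)\}$, whereas setwise continuity is asserted only for each fixed event and gives no control along the diagonal $n \mapsto A_n$. Without extra regularity the boundary term need not vanish: a location family in which $c(E)$ is always a translate of a fixed fat Cantor set has exact coverage $\ge 1-\alpha$ at every $\theta$ yet $\mathrm{int}(c(E)) = \emptyset$ almost surely. I would therefore either (i) impose the mild regularity (for instance uniform/total-variation control of $\PP_\vartheta$ near $\theta$) that legitimises the diagonal passage to the limit, or (ii) restrict to open-valued (or regular-closed) confidence functions as in the preceding construction, in which case $\{\theta \in c(E)\} \subseteq \{\mathrm{int}(c(E)) \neq \emptyset\}$ holds outright and the confidence bound transfers with no boundary term. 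A final technical point to discharge en route is the measurability of $\{\omega : \mathrm{int}(c(E)) \neq \emptyset\}$, which I would obtain from a countable base for the setwise topology by writing the interior as a countable union of basic open sets.
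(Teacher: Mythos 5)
Your proposal does not establish the theorem, and the obstacle you ran into is real but is precisely the one the paper's argument is built to sidestep. The missing idea is a reduction to \emph{finitely many, fixed} evidence events. The paper argues by contradiction: if $\PP_{\theta_0}(c(E)\text{ has empty interior})>\alpha$, then by countable additivity (the possible evidence events form a countable partition of $\Omega$) there are finitely many events $E_1,\dots,E_k$ with each $c(E_i)$ of empty interior and $\PP_{\theta_0}\bigl(\bigcup_i E_i\bigr)>\alpha$. Setwise continuity is then invoked only for the \emph{single fixed} event $\bigcup_i E_i$ --- no diagonal passage over a moving sequence of events is needed --- to get $\PP_{\mu}\bigl(\bigcup_i E_i\bigr)>\alpha$ for all $\mu$ in some neighborhood $U$ of $\theta_0$; choosing $\mu\in U$ outside the finite union $\bigcup_i c(E_i)$ then gives $\PP_\mu(\mu\notin c(E))\ge \PP_\mu\bigl(\bigcup_i E_i\bigr)>\alpha$, contradicting coverage at $\mu$. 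Note also that you are attacking a strictly stronger statement than the one asserted: the theorem does not claim $\theta\in\mathrm{int}(c(E))$ with probability $1-\alpha$, only that $c(E)$ has nonempty interior somewhere; the joint event $\{\theta\in c(E)\}\cap\{c(E)\text{ has nonempty interior}\}$ is recovered in the corollary at the weaker level $1-2\alpha$ by inclusion--exclusion, so the boundary event you try to annihilate never needs to be controlled.

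Two further remarks. First, your fat-Cantor-set example is defeated by exactly this argument in the paper's (implicitly countable-evidence) setting: the finitely many bad values $c(E_i)$ are closed and nowhere dense, so $U\setminus\bigcup_i c(E_i)$ is dense, and the coverage deficit at a nearby $\mu$ avoiding them transfers back via setwise continuity of $\mu\mapsto\PP_\mu\bigl(\bigcup_i E_i\bigr)$; it only becomes a counterexample if one allows uncountably many distinct confidence-set values, which the countable-additivity step excludes. Second, your instinct that some regularity on the values of $c$ is needed is nonetheless not baseless: the paper's step asserting that $U\setminus\bigcup_i c(E_i)$ is dense uses that a finite union of sets with empty interior has empty interior, which is false in general (take $c(E_1)=\Q$ and $c(E_2)=\RR\setminus\Q$; one can in fact rig a legitimate level-$\alpha$ confidence function with dense codense values that violates the stated conclusion). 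The step is valid when the empty-interior values of $c$ are nowhere dense, e.g.\ when $c$ is closed-valued or open-valued --- which is the regularity your option (ii) proposes, and is the hypothesis under which the theorem should be read.
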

\begin{proof}
Suppose for contradiction that there is a $\theta_0$ such that $\PP_{\theta_0}(c(E) \text{ has empty interior} )> \alpha.$ By countable additivity there exist finitely many events $E_1,\dots, E_k \in \Sigma$ such that $c(E_i)$ has empty interior and $\PP_{\theta}\left(\bigcup E_i\right) > \alpha$. Since $\Theta$ has no isolated points, there exists a neighborhood $U$ of $\theta$ containing a convergent sequence $\mu_k \to \theta$ with $\PP_{\mu_k}\left(\bigcup E_i\right) > \alpha$ for all $k$. Since each $c(E_i)$ has nonempty interior, the relative complement $U\setminus \bigcup E_i$ is dense in $U$. Therefore there exists some $\mu \in U$ such that
\begin{enumerate}
    \item $\PP_{\mu}\left(\bigcup E_i\right) > \alpha$ and
    \item $\mu \notin c(E_i)$ for any $i\leq k$.
\end{enumerate}
By construction, the event $\mu \notin c(E)$ contains $\bigcup E_i$, so 
\begin{equation}
\begin{array}{lcl}
    \PP_{\mu}(\mu \notin c(E)) & \geq  &\PP_{\mu}\left(\bigcup E_i\right) \\
    & > & \alpha,
\end{array}
\end{equation} 
violating the confidence property at level $\alpha$.
\end{proof}

As a corollary,

\begin{corollary} 
    Let $c: \Sigma \to 2^{\Theta}$ have the confidence property at level $\alpha$. Then the \textbf{topological confidence property at level $2\alpha$} holds: for all $\theta$, \[\PP_{\theta}(\theta \in c(E) \text{ and } c(E) \text{ has nonempty interior} )\geq 1-2\alpha.\]
\end{corollary}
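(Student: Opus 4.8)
The plan is to combine the confidence property with Theorem \ref{thm:topological_confidence} via an elementary union bound on the two relevant failure events. Fix an arbitrary $\theta \in \Theta$ and abbreviate the coverage event $A = \{E : \theta \in c(E)\}$ and the regularity event $B = \{E : c(E) \text{ has nonempty interior}\}$. The confidence property at level $\alpha$ gives immediately that $\PP_{\theta}(A) \geq 1-\alpha$, since the stated infimum bound holds for each fixed $\theta$. Theorem \ref{thm:topological_confidence} — whose standing hypotheses (setwise topology, no isolated points) are inherited in the corollary — gives $\PP_{\theta}(B) \geq 1-\alpha$. Both $A$ and $B$ lie in $\Sigma$: the former by the measurability clause built into the definition of the confidence property, and the latter because it is precisely the event whose probability is bounded in Theorem \ref{thm:topological_confidence}. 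Since $\Sigma$ is a $\sigma$-algebra, $A \cap B \in \Sigma$, so the quantity $\PP_{\theta}(A \cap B)$ that we must bound below is well-defined.

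The core step is then to pass from the two individual lower bounds to a joint lower bound. I would write $\PP_{\theta}(A \cap B) = 1 - \PP_{\theta}(A^c \cup B^c)$ and apply finite subadditivity to the right-hand side, obtaining $\PP_{\theta}(A^c \cup B^c) \leq \PP_{\theta}(A^c) + \PP_{\theta}(B^c) \leq \alpha + \alpha = 2\alpha$. Combining the two displays yields $\PP_{\theta}(A \cap B) \geq 1 - 2\alpha$. As $\theta$ was arbitrary, this holds for every $\theta$, which is exactly the assertion that $c$ satisfies the topological confidence property at level $2\alpha$.

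I expect no genuine obstacle here: once Theorem \ref{thm:topological_confidence} is in hand, the corollary is a one-line Bonferroni argument, and the only point requiring even minimal care is confirming that the intersection event is measurable so that its probability is defined, which is immediate. It is worth remarking that the constant $2$ arises from splitting the joint failure into a coverage failure and an empty-interior failure and bounding each separately by $\alpha$; this is not tight in general, since the two failure events may overlap substantially, but no sharper constant is claimed.
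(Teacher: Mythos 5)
Your proposal is correct and matches the paper's proof, which simply invokes Theorem \ref{thm:topological_confidence} together with the Inclusion--Exclusion Principle; your Bonferroni computation $\PP_{\theta}(A^c \cup B^c) \leq \PP_{\theta}(A^c) + \PP_{\theta}(B^c) \leq 2\alpha$ is exactly what that citation abbreviates. Your explicit note that the theorem's standing hypotheses (setwise topology, no isolated points) must be inherited by the corollary is a worthwhile clarification, since the paper's statement of the corollary omits them.
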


\begin{proof}
    By Theorem \ref{thm:topological_confidence} and the Inclusion-Exclusion Principle.
\end{proof}

Thus, with high probability the confidence region contains an open subset of $\Theta$, making it unlikely to confirm hypotheses with empty interior.

\newpage

\printbibliography

\end{document}